\newtheorem{lemma}{Lemma}
\newtheorem{result}{Result}
\newtheorem{theorem}{Theorem}
\newtheorem{proposition}{Proposition}
\newtheorem{assumption}{Assumption}
\newcommand{\eq}[1]{(\ref{#1})}
\newcommand{\fig}[1]{Figure~\ref{fig:#1}}
\newcommand{\tab}[1]{Table~\ref{tab:#1}}
\newcommand{\thm}[1]{Theorem~\ref{thm:#1}}
\newenvironment{remark}
   {\vskip 0.2ex minus 0.3ex\trivlist\item[\hskip
     \labelsep{\sc Remark:}]}
   {\endtrivlist\vskip 0.2ex minus 0.3ex}
\newcommand{\qed}{ \hfill {$\Box$}\\%
           }
\newenvironment{proof}
   {\vskip 0.5ex minus 0.3ex {\bfseries Proof:} }
{\qed}
\font\cmss=cmss10 scaled 1100
\font\goth=eufm10 at 11pt
\newcommand{\bsl}{\bfseries\itshape}  
\def\ds{\displaystyle}
\def\Field{{\hbox{\goth F}}}
\def\Real{{\mathbb R}}   
\def\Integer{\mathbb N}  
\def\var{\hbox{\cmss Var}}
\def\given{{\: | \: }}
\def\part#1{{\partial\over\partial{#1}}}
\def\ind#1{{\mathbf 1}_{\left\{{#1}\right\}} }  
\def\silent#1{}
\def\param{{\alpha}}
\def \a {{\alpha}}
\def\th{{\theta}}
\def \p {{\prime}}
\def\prob{{\mathbb P}}
\def\esp{{\mathbb E}}
\def \Epi {{\esp}_{\pi(\param)}}
\def\covpi{{\hbox{\rm Cov}}_{\pi(\param)}}
\def\PP{\hbox{\cmss P}}
\def\E{\hbox{\cmss E}_i}       
\def \Ep {{\esp}_{\param}}
\def\nablap{{\nabla_\param}}
\def \nablal{{\nabla_\a}}
 \def \G{{B}}
\def\ep{{\epsilon}}
\def\actionset{{\cal U}}
\def \admissible{{\cal D}}
\def\la{{\lambda}}
\newcommand{\Al}{{\large\text{\boldmath$\a$}}}
\def\Param{{\Al}}
\def \con {{\beta}}
\def \rcon {{\gamma}}
\def \defn {\stackrel{\triangle}{=}}
\def \policy {\mathbf u}
\def \La {{\cal L}}
\def \GF{{\widehat{\nabla C}^{\text{Frozen}}}}
\def \GS{{\widehat{\nabla C}^{\text{Score}}}}
\def \tp {{P}}
\def \Z {{\cal Z}}
\def \J {{\mathcal I}}
\def \q {\mathcal{C}}
\newcommand{\belief}{\pi}
\newcommand{\tpm}{P_\theta}
\newcommand{\dob}{\rho}
\newcommand{\bsize}{N}
\newcommand{\dvar}[2]{\|{#1}-{#2}\|_{\text{\tiny{TV}}}}
\author{Vikram Krishnamurthy, \\ 
Cornell Tech \& School of
Electrical and Computer Engineering\\ 
Cornell University
\\ email: {\tt vikramk@cornell.edu}
\and
Felisa J.~V\'azquez Abad,\\ 
Hunter College, City University of New York\\
email: {\tt felisav@hunter.cuny.edu}}
\title{Real-Time Reinforcement Learning of Constrained Markov Decision  Processes
  with Weak Derivatives}
\begin{document}
\maketitle

\begin{abstract}
  We present  on-line policy gradient algorithms for computing the
  locally optimal policy of a constrained, average cost, finite state
  Markov Decision Process.
The stochastic approximation
algorithms
  require estimation of the gradient of the cost function with
  respect to the parameter that characterizes the randomized policy.
  We 
propose a spherical coordinate  parameterization
and present a novel simulation based gradient estimation
scheme involving weak derivatives (measure-valued differentiation).
Such methods have substantially reduced variance compared to the widely used score function method. Similar to neuro-dynamic
  programming algorithms (e.g. Q-learning or Temporal Difference
  methods), the algorithms proposed in this paper are simulation based
  and do not require explicit knowledge of the underlying parameters
  such as transition probabilities.  However, unlike neuro-dynamic
  programming methods, the algorithms proposed here can handle
  constraints and time varying parameters. Numerical examples are
  given to illustrate the performance of the algorithms.

  This paper was originally written in 2004. One reason we are putting this on arxiv now is that  the score function gradient estimator  continues to be used in the online  reinforcement learning literature  even though its variance grows as $O(n)$ given $n$ data points (for a Markov process). In comparison the weak derivative estimator has significantly smaller variance of $O(1)$ as reported in this paper (and elsewhere).
  
\end{abstract}

\section{Introduction} \label{sec:intro}

This paper deals with the
adaptive control of  a finite-state finite-action average
cost constrained Markov
Decision process (MDP). Such a constrained  MDP is constructed as follows:

\paragraph{Constrained MDP.}
Let $S$ denote an arbitrary finite set called the {\em state space}.
Let $\actionset_i$, $i\in S$ denote an arbitrary  collection of finite sets
called  {\em action sets}.
A unichain Markov Decision Process \cite{Put94}
(MDP) $\{X_n\}$ with finite state space $S$ evolves as follows:
  When the system is in state $i\in S$, a
finite number of possible actions, which are elements
of the  finite set $\actionset_i$
 can be taken. Let $u_n$ denote
the action taken by the decision maker at time $n$ and let $d(i)+1$
 denote
the cardinality of the action set $\actionset_i$. 
 The evolution of the system is Markovian with a 
transition probability matrix $A(u)$ that
depends on the action $u\in \actionset_i$, that is for $i,j \in S$,
\begin{equation}
\label{Aij}
A_{ij}(u) \defn \prob[X_{n+1} = j | X_n = i, u_n = u],  \quad u \in \actionset_i,
\quad n=0,1,\ldots
\end{equation}
Denote by  $\Field_n, n\ge 1$  the 
$\sigma$-algebra generated by the observed {\em system trajectory}
$(X_{0}, \ldots,X_n, u_0,\ldots,u_{n-1})$ and set
$\Field_{0}$ as the $\sigma$-algebra generated by $X_{0}$.  The
filtration of the process is the increasing sequence of $\sigma$-algebras
$\{\Field_n, n\ge0\}$.  
Define the set of {\bf admissible} policies $
\admissible = \{ \policy = \{u_n\} : u_n \mbox { is measurable w.r.t. } \Field_n, \
\forall n\in \Integer\}
$. This 
 means that $u_n$ is a (possibly random) function of 
$(X_0,\ldots,X_n,
u_0,\ldots,u_{n-1})$.  By {\em unichain} \cite[pp.348]{Put94}
we mean that every policy where $u_n$  is a deterministic
function of $X_n$ consists of a 
single recurrent class  plus possibly an empty set of transient
states.  

 The cost incurred at stage
$n$ is a known bounded function
$c(X_n,u_n)\geq 0$ where $c: S \times \actionset \rightarrow \Real$. For any
admissible policy $\policy \in \admissible$,
let $ \esp_{\policy}$ denote the corresponding expectation and 
 define the infinite horizon average cost
\begin{equation}
   \label{J}
J_{x_0}(\policy) = \lim_{N\to\infty} \sup {1\over N} \esp_{\policy} \left[\sum_{n=1}^N
c(X_n,u_n) \mid X_0 = x_0\right].
\end{equation}
Motivated by several problems in telecommunication network optimization such
as admission control in wireless networks \cite{SKP02},
 we consider the cost  (\ref{J}), subject to  $L$ 
 sample path constraints  
$$ \prob_{\policy}\left[\lim \sup_{N \rightarrow \infty} \frac{1}{N} \sum_{n=1}^N
\con_l(X_n,u_n)
\leq \rcon_l \right] = 1, \quad l=1,2,\ldots,L,$$
where $\con_l: S \times \actionset \rightarrow \Real$ are known bounded functions and  $\gamma_l$ are known constants.
These are used, for example, in admission control of telecommunication
networks to depict quality of service (QoS) constraints,
see \cite{SKP02}.
For unichain MDPs, these sample path constraints are equivalent to the
average constraints  \cite{RV89}:
\begin{equation}
 \label{costconstraint}
   \lim_{N\to\infty} \sup {1\over N} \esp_{\policy}\left[ \sum_{n=1}^N\con_l(X_n,u_n)
\right] \leq
\rcon_l,
\quad l=1,\ldots,L.
\end{equation}
The aim is to compute the optimal policy $\policy^* \in \admissible$ that satisfies
\begin{equation}
J_{x_0}(\policy^*) = \inf_{\policy \in \admissible} J_{x_0}(\policy) \quad
\forall x_0 \in S, \label{eq:objective1}
\end{equation}
 i.e., $\policy^*$ has the minimum cost for all initial states
$x_0 \in S$ subject to the constraints (\ref{costconstraint}).
It is well known \cite{Alt99} that if $L>0$  then
the optimal policy $\policy^*$ 
is {\em randomized} for at most $L$ of the states.
If the transition probabilities $A_{ij}(u)$ (\ref{Aij}) are
 known, then the optimal policy $\policy^*$ for 
the constrained MDP (\ref{J}), (\ref{costconstraint}) is
straightforwardly computed as the solution of a linear programming
problem.


\paragraph{Objectives.}

This paper presents policy gradient (stochastic approximation)  algorithms for adaptively
computing the optimal policy $\policy^*$ of the above
constrained MDP  (\ref{J}), (\ref{costconstraint})
when the transition
probabilities (\ref{Aij}) are not known -- so the problem 
(\ref{J}), (\ref{costconstraint}) is an adaptive constrained Markov
decision process problem. 
There are two methodologies that are used in the literature
for solving such stochastic adaptive control problems: {\bsl direct
  methods}, where the unknown transition probabilities $A_{ij}(u)$ 
are estimated
{\em simultaneously} while updating the control policy, and {\bsl
  implicit methods} -- such as simulation based methods, where the
transition probabilities are not directly estimated in order to
compute the control policy.

In this paper, we focus on implicit simulation-based algorithms
for solving the  MDP  (\ref{J}), (\ref{costconstraint}).
By simulation based we mean that although
the transition probabilities $A(u)$, $u \in \actionset_i$, $i\in S$
 are unknown,  the decision maker 
can observe the system trajectory under any choice of control actions $\policy = \{u_n\}$. 
In other words, the adaptive control algorithms we present
are  adapted to the filtration $\{\Field_n; n\ge 0\}$ defined above.
Moreover, these algorithms can deal with slowly time varying transition
probabilities $A(u)$.

Neurodynamic programming methods \cite{BT96}
such as Q-learning and temporal
 difference methods  are also examples of simulation based implicit methods 
that have been widely
 used to solve unconstrained MDPs where the optimal
policy $\policy^*$ is a pure policy, i.e., $u_n^* $ is a deterministic
function of $X_n$.  However, for the constrained MDP 
 (\ref{J}), (\ref{costconstraint}),
since
 the optimal policy is randomized, there seems to be no obvious way of
 modifying such neurodynamic programming
 algorithms to obtain an optimal randomized policy.

\paragraph{Summary of Main results} 
\label{sec:contribution}
As mentioned above, the optimal policy $\policy^*$  for the
 constrained MDP (\ref{J}), (\ref{costconstraint}) is 
 randomized, that is, it
corresponds to the decision maker choosing an action from the set of possible actions according to certain probabilities, 
called {\em action probabilities}. 
The constrained MDP control problem can be formulated as a stochastic optimization problem in terms of these
action probabilities. 
Our approach is to use a simulation based
 stochastic gradient  algorithm to adaptively track the optimal
action probabilities.

There are two main contributions in this paper:
\medskip

\noindent{\bsl 1. Parameter Free Measure Valued (Weak Derivative) Gradient Estimation}: The most important issue in implementing the
stochastic gradient algorithm that computes the optimal policy for the
constrained MDP is to construct consistent estimates of the gradient
of the cost and constraints of the MDP with respect to the
parametrization of the action probabilities.  There are three widely
used methodologies for simulation based gradient estimation -- score
function method, sample path derivatives and weak derivatives, see
\cite{Pfl96}\footnote{Note that finite difference methods such as the Kiefer Wolfowitz
and  Simultaneous  Perturbation Stochastic Approximation (SPSA) are not considered in this
paper to be gradient
estimators}.  The Score Function method has been used in \cite{BB99,BB02}
to estimate the gradient of an unconstrained MDP with respect to the
action probabilities.  However, the score function method suffers from
large variance and can yield unbounded variance estimation for MDPs.

The main contribution in this paper 
is to present  a novel gradient estimator
involving weak derivatives computed over short batch lengths to
facilitate control of time varying MDPs.  This gradient
estimator does not require explicit knowledge of the transition probabilities
$A(u)$
 -- so for brevity  we call it a ``parameter free'' gradient
estimator. To the best of our knowledge
this gradient
estimator is new, and is in contrast to the concept of the ``realization
perturbation factors'' \cite{Cao98}, which is equivalent to computing
weak derivatives over an infinite batch length, see also \cite{Pfl96}.
The key advantage of devising  gradient
estimation algorithms over short batch lengths is
that they are amenable to adaptive control of MDPs with time varying
parameters. Moreover, these gradient estimators yield several orders
 of magnitude
reduction in variance compared to the score function method in \cite{BB99,BB02}.
 The parameter free
gradient estimation algorithms we present in Sec.\ref{sec:lcmdp}
use the concept of a
``phantom''  system \cite{Vaz99}, which is a parallel system whose evolution
is identical to the original process but with a different initial
condition.  The difference between the costs of
the nominal trajectory and the phantom trajectory 
 can then be used to determine information
about the gradient of the cost function with respect to the action
probabilities. 
 Conventionally in \cite{Dai00, Cao98}, off-line
simulations are used to generate the evolution of a phantom -- we will
call such phantoms the {\em Doeblin} phantoms.  However, we are
interested in solving an MDP with unknown and possibly time varying
parameters.  In Sec.\ref{sec:lcmdp}, we present a novel method based on
``cut and paste'' techniques which look at the past observation
history to create ``frozen phantoms''. By filtering these frozen
phantoms, we derive a parameter free consistent algorithm for
estimating gradients of the cost without explicit knowledge of the
transition probabilities
of the MDP and without requiring off-line simulations of the
system.

We also present in Sec.\ref{sec:lcmdp}  
the statistical properties 
(consistency, bias for small batch size, and efficiency) of the 
gradient estimates. For the first
time we also present estimates of the number of parallel phantoms, thus
determining the mean coupling time and the mean computational complexity.

{\bsl 2. Adaptive Control of Constrained
MDP  with time varying parameters}:
Putting
 the parameter free gradient estimation algorithm
into a  stochastic gradient
algorithm results in a new simulation based
algorithm for the adaptive control of a  constrained MDP.
There are three important results that we present in this context.

First, the algorithm we present allows simulation based adaptive control of a constrained time varying MDP.  Similar to neuro-dynamic programming
algorithms \cite{BT96} (e.g., Q-learning and Temporal Difference
methods), the algorithms proposed in this paper are simulation based
and do not require explicit knowledge of the underlying parameters of
the MDP such as transition probabilities (or equivalently invariant
distributions). However, unlike Q-learning or Temporal Difference
methods, the algorithms proposed here straightforwardly can handle
constraints.
Sec.\ref{sec:convergence} develops constant
step size  stochastic approximation
algorithms, based on the deterministic
constrained optimization  algorithms of Sec.\ref{sec:detalg} for
solving time varying  constrained MDP.

Second, because the constraints in the MDP
are in the form of long term averages that are not  
known to the decision maker, it
  is not possible to use stochastic approximation algorithms with the usual gradient projection methods in
  \cite{KC78}, for example.  We present 
 primal dual based stochastic approximation algorithms with a penalty function
and  augmented Lagrangian (multiplier)
  algorithms are presented for solving the time varying constrained MDP.
 Weak convergence of the action probability estimates to the optimal
 action probabilties is established. 
Sec.\ref{sec:tradeoff} illustrates
 the numerical performance of the algorithms on a constrained
MDP with time varying transition probabilities.

Thirdly, because the action probabilities must always add up to one
and be non negative, there is strong motivation to develop a
parametrization that ensures feasibility of the estimates generated by
the stochastic approximation algorithm at every step.  We do so by
significantly extending  an idea  in \cite{KL96b} where the square root
of the probabilities were considered and projected to the tangent
manifold. We parameterize the action probabilities of the constrained
MDP 
using {\em spherical coordinates}.
This parametrization is particularly well suited to the
MDP problem and has superior convergence rate, see discussion in
Sec.\ref{sec:approaches}. To the best of our knowledge such
a spherical coordinate parameterization has not been used in the context
of stochastic approximations. For example in \cite{BB99,BB02}, a different
parameterization is used which is similar to the generalized gradient
approach of \cite{Vaz99}.

\paragraph{Context.  Offline vs Real Time Reinforcement Learning} This paper considers  real time policy gradient estimation; that is,  the gradient estimation  and reinforcement  learning is performed in real time
via a stochastic approximation algorithm:
$$ \theta(n+1) = \theta(n) - \epsilon \,\nabla L_n (\theta(n)) $$
where $\theta(n)$ is the parametrized policy at time $n$, $L_n$ is a weak derivative  estimate and $\epsilon$ is a fixed step size algorithm. The fixed step size allows for tracking a time-evolving optimal policy. Note that $L_n(\theta(n))$ is what is termed ``Markovian noise'' \cite{KY03} and we prove weak convergence of the real time policy gradient algorithm to a Kuhn-Tucker point in such a Markovian setting using the ordinary differential  equation approach.

An alternative (and much  simpler case) is to perform  offline policy gradient estimation where the system is run iteratively and each iteration involves generating an entire  independent sample path for a fixed parametrized policy. Such an offline iterative policy gradient algorithm is is of the form:
$$ \theta^{(I+1))} = \theta^{(I)} - \epsilon_I\, \nabla L^{(I)}(\theta^{(I)}) $$
where $I$ denotes iteration number and $\epsilon_I$ is a decreasing step size algorithm.
Such an offline policy gradient converges almost surely to a Kuhn Tucker point
under  much simpler conditions since the individual iterations generate independent sample paths and one does not need to consider the short-term behavior of the gradient estimator. Such decreasing step size  stochastic gradient algorithms with independent noise have been studied since the 1960s.
In contrast,
much of the novelty of the current paper involves constructing and analyzing a weak derivative estimator that does not require knowledge of the transition matrices and operates over a single trajectory of data for real time implementation, and weak convergence analysis in Markovian noise (for a constant step size algorithm).

It is important to note that while in the online case, weak derivatives yield substantially lower variance for the gradient estimate (as shown in this paper), in the offline policy gradient case (since the iterations yield statistically independent trajectories and gradient estimates), the score function gradient estimator and also finite difference methods such as SPSA  \cite{Spa03} work extremely well.

To give further perspective on the  key contributions of this paper, 
 in  Sec.\ref{sec:approaches}, we contrast our approach with 
two other   results  in the  literature, namely, 
 \cite{PNG00} and  \cite{BB99,BB02}
 that also use stochastic gradient
algorithms for implicit adaptive control of MDPs. Also in Sec.\ref{sec:othergrad},
we compare our weak derivative  gradient estimation scheme with two other
widely used gradient estimators in the literature, namely,
perturbation realization factors \cite{Cao98} and the score function estimator
\cite{BB99,BB02}. In Sec.\ref{sec:numerical} we compare the numerical efficiency
of the gradient estimator proposed in this paper with that of the score
function gradient estimator in \cite{BB99,BB02}. We also refer the reader to \cite{HHW06,HV08} for additional background in measure-valued differentiation.

\section{Problem Formulation and Discussion}

In this section we formulate the constrained MDP (\ref{J}),  (\ref{costconstraint}) as a stochastic optimization problem
in terms of parameterized action probabilities. Then for convenience
a summary of the key algorithms in this paper is given. Finally
we briefly summarize how our approach differs from two other approaches
in the literature.

\subsection{Spherically Parameterized Randomized Policies} \label{sec:prr}

The randomized optimal policy for the above constrained MDP can be 
defined in terms of
 the  action probabilities $\th$ parameterized by $\psi$
as:
\begin{eqnarray}
      \prob[u_n = a  | X_n=i] &=& \th_{ia}(\psi), \quad a \in \actionset_i,\;i \in S
   \label{randomized}
\\
  \text{ where } \th_{ia}(\psi) \geq 0,
\quad \sum_{a\in\actionset_i} \th_{ia}(\psi) &=&1 \text{ for every state\ } i
\in S.
\nonumber
\end{eqnarray}
Here $\psi \in \Psi$ is a finite dimensional vector which parameterizes the
action probabilities
 $\th$. $\Psi$ is some suitably defined compact subset of the Euclidean space.
The unconstrained problem with pure optimal policy  is a degenerate case,
where for each $i\in S$, $\th_{ia}=1$ for some $a \in \actionset_i$.

 The most obvious parameterization
$\psi$ for $\th$
-- which we will call {\em canonical coordinates}
 is to choose  $ \psi= \th$. Thus
 $\psi = \{\psi_i\} = 
 \{\th_i\}$, $ i\in S$ is the set of  action 
 probability vectors $(\psi_{ia}; a \in
 \actionset_i)$ satisfying \eq{randomized}.
As discussed in Sec.\ref{sec:approaches}, this 
parameterization has several disadvantages.

In this paper we use a
more  convenient {\em spherical coordinate}
 parameterization $\psi = \a$ that automatically ensures the
feasibility of $\th(\param)$ (i.e., the constraints in (\ref{randomized})
hold) without imposing 
a hard constraint on $\param$.
Such an  approach
was  introduced in \cite{KL96b} for Hidden Markov
Model parameter estimation.
 Adapted to our MDP problem, it reads
as follows: Fix the control agent $i\in S$. Suppose without loss of generality that $\actionset_i = \{0,\ldots,
d(i)\}$. To each value
$\th_{ia}, a\in\actionset_i$ associate the values $\la_{ia} =
\sqrt{\th_{ia}}$. Then \eq{randomized} yields 
$\sum_{a \in \actionset_i} \la^2_{ia} =1$, and
$\la_{ia}$ can be interpreted as the coordinates of a vector that lies
on the surface of the unit sphere in $\Real^{d(i)+1}$, where $d(i)+1$ is
the size of $\actionset_i$ (i.e., number of actions).
  In spherical coordinates, the angles are
$\a_{ip}, p =1,\ldots d(i)$, and the radius is always of size
unity. For  $d(i) \geq 1$, the spherical coordinates
parameterization $\a$ satisfies:
\begin{equation} \label{eq:alfadef}
\th_{ia}(\a) = \la_{ia}^2, \quad
\la_{ia} = 
\begin{cases} \cos(\a_{i,1}) & \text{if } a=0 \\
              \cos(\a_{i,(a+1)}) \prod_{k=1}^a \sin(\a_{i,k}) &
1 \leq a \leq d(i) -1 \\
\sin(\a_{i,d(i))}) \prod_{k=1}^{d(i)-1} \sin(\a_{i,k})  & a = d(i) 
\end{cases}.
\end{equation}
Note that $\th_{ia}(\a) = \la_{ia}^2$ is  an analytic function of $\a$,
i.e., infinitely differentiable in
 $\a$.
It is clear that  under this $\a$
 parameterization,
the control variables $\a_{ip}, p\in\{1,\ldots, d(i)\}$ do not need to
satisfy any  constraints in order
for $\th(\a)$ to be feasible. Furthermore,
 since $\th_{ia}(\a) = \la_{ia}^2$ involves even powers
of $\sin (\a_{i,p})$ and $\cos(\a_{i,p})$,
 it suffices to consider 
 $\a_{ip} \in \Param$ where $\Param$ denotes the compact set
\begin{equation}
 \Param = \left\{ \a_{ip} \in [0,  \pi/2];\; i \in S, \;
p \in \{1,\ldots,d(i)\}\right\}, \label{eq:Al}
\end{equation}
that is, for any $\a\in \Real^{d(i)}$, there is a unique
$\a \in \Param$ which yields the same value of $\th(\a)$.
Let $\Param^o$ denote the interior of the set $\Param^\mu$. Finally,
define the compact set $\Param^\mu\subset \Param^o$ for user
defined small parameter $\mu>0$ as 
\begin{equation}
  \Param^\mu = \left\{ \a_{ip} \in [\mu,  \pi/2-\mu];\; i \in S, \;
p \in \{1,\ldots,d(i)\}\right\}.
\label{eq:amu}
\end{equation}

Notice that $\Param^\mu$ is simply  $\Param$ minus a balls of radius $\mu$ centered
around 0 and $\pi/2$.  As discussed in detail
 after the
 statement of
  Proposition
\ref{prop:SA1} in Sec.\ref{sec:convergence}, excluding $0$ and $\pi/2$ is necessary
to prove convergence of the algorithms we propose -- however, since $\mu$ can be chosen
arbitrarily
small, it is not important in the actual algorithmic implementation.

\subsection{Parameterized Constrained MDP Formulation} \label{sec:paramMDP}
We now formulate the above MDP
problem  as a stochastic  optimization problem
where the instantaneous random
cost is independent of $\param$ but the expectation is with respect to a measure
parameterized by $\param$. Such a ``parameterized integrator''
formulation is common in gradient estimation,
see \cite{Pfl96},
and will be subsequently used to derive  our gradient estimators.
 Consider the augmented (homogeneous) Markov chain
$Z_n \defn (X_n,u_n)$ with state space 
$\Z =S \times \actionset$ and transition probabilities parameterized by $\param$ given by
\begin{eqnarray}
\tp_{i,a,j,a'}(\param)
&\defn& \prob(X_{n+1}=j,u_{n+1}=a'\mid X_n=i,u_n=a) = \th_{j,a'}(\param)\,
A_{ij}(a),
\nonumber\\
&&  i,j \in S,  a \in \actionset_i, a' \in \actionset_j
\label{kernel}
\end{eqnarray}
From the unichain assumption it follows that for any $\param\in {\Param}^o$ 
(interior of  $\Param$), the chain
$\{Z_n\}$ is ergodic, and it possesses a unique invariant probability
measure $\pi_{i,a}(\param); i\in S$, $a \in \actionset_i$.
Let $\Epi$  denote  expectation  w.r.t measure $\pi(\param)$ parameterized by 
$\param$.
From
(\ref{J}) we have
$ J_{x_0}(\policy) = C(\th(\a)) $,
where 
\begin{equation}
C(\th(\a)) \defn \Epi [c(Z)] =
\sum_{i \in S} \sum_{a \in \actionset_i} \pi_{i,a}(\param) c(i,a).
\label{eq:cadef}
\end{equation}
We subsequently denote
$C(\th(\a))$ as $C(\a)$, whenever it is convenient.
The $L$  constraints
\eq{costconstraint} can be expressed as
\begin{equation}
 \G_{l}(\param) \defn \Epi[\con_l(Z)] - \gamma_l
=\sum_{i\in S} \sum_{a \in \actionset_i} \pi_{i,a}(\param)
\con_l(i,a) -\gamma_l \leq 0,
 \quad l= 1,\ldots,L.
\label{B}
\end{equation}
Define $\G = (\G_1,\ldots,\G_l)$.
Thus the optimization problem (\ref{eq:objective1}) with constraints
(\ref{costconstraint}) can be written as
\begin{align}
&\min_{\param\in\Param^\mu} C(\param) 
   \label{optim}
\\
\text{subject to: } & \quad
 \G_l(\param) \leq 0, \quad l=1,\ldots,L .\label{soft}
\end{align}
The constraints $\G_l(\param)$ in (\ref{soft})
will be called
{\bf MDP constraints}. 
 As should be evident from this formulation, the optimal control problem 
(\ref{optim}), (\ref{soft})
depends uniquely on
the {\em invariant} distribution $\pi(\param)$
 of the chain, rather than the (unknown) transition
probabilities
$A_{ij}(u)$.

Recall that expected cost $C(\a)$
and expected constraints
 $\G_l(\a)$ are not known to the decision maker. Our aim is to devise
a recursive (on-line) stochastic approximation algorithm to optimize
(\ref{optim}),  (\ref{soft}) without
explicit knowledge of the transition probabilities $A(a)$.
 Such an algorithm
operates recursively on the observed system trajectory
$(X_{0}, \ldots,X_n, u_0,\ldots,u_{n-1})$  to yield
a sequence of estimates $\{\a(n)\}$ of the optimal solution.
If the unknown 
dynamics  $A(a)$ of the MDP are constant,
then the proposed algorithm ensures that the estimates $\a(n)$
approach the optimal solution. On the other hand, if the unknown 
underlying dynamics  $A(a)$ slowly evolves with time, then our
algorithm will track the optimal trajectory in a sense to be made
clear later.

\begin{remark}
For any  $\param\in {\Param}^o$, the functions $C(\a)$ and $\G_l(\a)$ are 
analytic in $\a$, which follows from the facts that the transition
probabilities in (\ref{kernel}) are linear in $\th(\a)$, and that $\th(\a)$ 
is analytic in $\a$, as mentioned above (\ref{eq:Al}). Furthermore,
due  to the unichain assumption, applying the
continuous mapping theorem to $\pi(\a)$, it follows that $C(\a)$
and $\G_l(\a)$ are continuous in $\Param^\mu$.
This implies that $C(\a)$
and $\G_l(\a)$ are
uniformly bounded.
\end{remark}

\begin{assumption} 
\label{assum:A2}
The minima
 $\param^*$ of (\ref{optim}), (\ref{soft})
 are regular, i.e., $\nablap \G_l(\param^*)$, $l=1,\ldots,L$ are linearly independent.
Then $\param^*$ belongs to the set of Kuhn Tucker points
\begin{align} \label{eq:KT}
\text{KT} = \biggl\{ \param^* \in \Param^\mu: & \;\exists\, \mu_l \geq 0, \,
l=1,\ldots, L\;\text{ such that
} \quad
  \nablap C + \nablap \G \mu = 0, \quad
 \G^{\p} \mu = 0 \biggr\}
\end{align}
where $\mu = (\mu_1\ldots,\mu_L)^{\p}$.
Moreover $\param^*$ satisfies the second order sufficiency condition
$\nablap^2 C(\param^*) + \nablap^2 \G(\param^*) \mu > 0$ (positive definite)
on the subspace 
$\{ y \in \Real^L: \nablap \G_l(\param^*) y=0\}$ for all $l: \G_l(\param^*)=0,
\mu_l > 0$.
\end{assumption}

\subsection{User's Guide}
 A summary of the key equations for implementing
the learning based algorithm proposed in this paper in spherical coordinates
for constrained MDP (\ref{optim}), (\ref{soft}) is as follows:

{\em Input Parameters}: Cost matrix $(c(i,a))$, constraint matrix $(\con(i,a))$,
batch size $N$.

{\em Step 0. Initialize}: Set $n=0$, initialize $\a(n) \in \Param^o$ and 
vector $\la(n) \in \Real_+^L$. \footnote{More precisely,
$\a(0)$ needs to be initialized in $\Param^\mu$, where
$\Param^\mu \subset \Param^o$ excludes a small $\mu$ size ball around the boundary of 
$\Param$, see 
(\ref{eq:amu}) and Sec.\ref{sec:stochpd}.}

{\em Step 1. System Trajectory Observation}: Observe  MDP over batch
$ I_n \defn \{k \in [nN, (n+1)N-1]\}$ using randomized policy $\th(\a(n))$ of
(\ref{eq:alfadef}) and
compute estimate $\hat{\G}(n)$ of the constraints, (cf.  
\eq{eq:hhat} or (\ref{eq:spd3}) of  Sec.\ref{sec:tradeoff}).

{\em Step 2. Gradient Estimation without explicit knowledge of parameters}: Compute
 $\widehat{\nablal C}(n)$,  $\widehat{\nablal \G}(n)$ over the batch
$I_n$
 using the filtered frozen phantom estimates $\widehat{\widehat{G}}_n$ given in
\eq{eq:gradient}.

{\em Step 3. Update Policy $\th(\a(n))$ using  
constrained  stochastic gradient algorithm}:
Use a penalty function primal dual based stochastic approximation
algorithm  to update $\a$ as follows:
\begin{align}
 \a(n+1) &= \;
\a(n) - \ep \biggl(\widehat{\nablal C}(n) +
\widehat{\nablal \G}(n) \max \left[0,\la(n) + \rho \widehat{\G}(n)
\right]\biggr) 
\label{eq:sam1}\\
\la(n+1) &= \max\left[\left(1-\frac{\ep}{\rho}\right) \la(n) ,\la(n) + \ep \widehat{\G}(n) 
\right]. \label{eq:sam2} \end{align}
The ``penalization'' $\rho$ is a suitably large positive constant and
$\max[\cdot.\cdot]$ above is taken element wise, see (\ref{eq:pd1}),
(\ref{eq:pd2}).

{\em Step 4}. Set $n=n+1$ and go to Step 1.

\medskip

\begin{remark}
1. For large batch size $N$, the bias of the estimates $\hat{\G}$ and 
$\widehat{\nablal \G}$ are $O(1/N)$ and the algorithm 
(\ref{eq:sam1}), (\ref{eq:sam2})
is asymptotically
optimal. However, for  fast tracking of time-varying MDPs it is necessary
to choose $N$ small. In this case, the main source of bias in the estimation
of $\a^*$ using (\ref{eq:sam1}), (\ref{eq:sam2}) is
the covariance of $\widehat{\nablal \G}$ with $\hat{\G}$. Several approaches
to deal with this bias are discussed in
  Sec.\ref{sec:tradeoff}.
\\
2. Another alternative to (\ref{eq:sam2}) is to update $\la$ via a multiplier (augmented
Lagrangian) algorithm (see
Sec.\ref{sec:multiplier}). A third alternative is to fix $\la$. For 
sufficiently large $\rho$, $\a(n)$ will converge
to $\a(\infty)$ which is in a pre-specified ball around
a local minimum $\a^*$.
\\
3. If the true parameters of the MDP jump change at infrequent intervals, then
  iterate averaging \cite{KY03} (as long as the minimal window of
  averaging is smaller than the jump change time) and adaptive step
  size algorithms  can be implemented in the
  above stochastic approximation algorithms to improve efficiency and
  tracking capabilities. 
\end{remark}

\subsection{Discussion of Other approaches in Literature} \label{sec:approaches}
Before presenting the details of the algorithm proposed in this paper,
we briefly summarize two works in the literature
that also use stochastic approximation
methods to solve MDPs.

 The book \cite{PNG00} considers a different parameterization to us --
instead of the parameter $\th$, they consider the parameter to be the invariant measure 
$\pi_{i,a}$ of $Z_n$ described above.
It is well  known \cite{Put94}  that (\ref{optim}) formulated in terms of the invariant measure
$\pi$ is the following
linear program:
\begin{align}
\min_{\pi}  &\sum_{i \in S}\sum_{a\in \actionset_i} \pi_{ia} c(i,a)  \label{linear_prog} \\
\text{ subject to } & \sum_{i \in S}\sum_{a\in \actionset_i} \pi_{ia}\con_l(i,a)   <
\gamma_l \nonumber \\
 \sum_{a \in \actionset_j} \pi_{ja} &= \sum_{i \in S} \sum_{a \in \actionset_j}
 \pi_{ia} A_{ij}(a) ,\quad
 \sum_{i\in S, a \in \actionset_j} \pi_{ia} = 1, \quad 0  \leq \pi_{ia} \leq 1 ,
\quad i \in S , a\in \actionset_j . \nonumber
\end{align}
With $\th^*$ and $\pi^*$ denoting the optimal solutions of (\ref{optim})
and (\ref{linear_prog}), respectively, it is straightforward to show that  
\begin{equation}
\th^*_{ia} = {\pi^*_{ia}\over \sum_{u\in\actionset_i} \pi^*_{iu}} .
\label{eq:theta*}
\end{equation}

The main idea in \cite{PNG00} is to use stochastic approximations for
each component of $\pi_{ia}$ to optimize the above cost function (\ref{linear_prog}).
Because the above cost function is linear in $\pi_{ia}$ the gradient
is merely the observed cost $c(i,a)$. On the other hand, the constraints
in (\ref{linear_prog}) are difficult to handle via stochastic approximation.
 The authors deal with the parameter
constraints using a normalization procedure common in the Learning
Automata Theory. The MDP
constraints are dealt with via Lagrange multipliers, a penalty
function approach and a gradient projection method. The last two
algorithms are direct methods and therefore require estimation of the
transition probabilities $A_{ij}(u)$. Although the Lagrange method does
not require explicit estimation of $A_{ij}(u)$, a closer analysis of the algorithm in \cite{PNG00}
reveals that maximum likelihood estimation is implicitly carried out
in the estimation of the coefficients (gradients) of the Lagrangian
function.

The closest approach to our  paper is that
presented in \cite{BB99,BB02}. The MDP in \cite{BB99,BB02} is without
constraints and uses the parameterization
\[
\th_{ia}(\psi) = {e^{\psi_{ia}}\over\sum_{u\in\actionset_i} e^{\psi_{iu}}},
\quad \psi_{ia} \in \Real, i \in S, \; a \in \actionset_i .
\]
  This exponential parameterization satisfies 
\[
{\partial\th_{iu}  \over \partial \psi_{ia}} = 
\begin{cases}
\th_{iu}(1-\th_{iu}) &  u=a \\
-\th_{iu}\th_{ia} &  u\ne a.
\end{cases}
\]
Using
the chain rule of differentiation on the cost  function
$C(\th(\psi))$ (defined similarly to (\ref{eq:cadef}))
\begin{align}
\part{\psi_{ia}} C[\th(\psi)] &= \sum_{u\in\actionset_i} \part{\th_{ia}} C(\th)\,
\left({\partial\th_{iu}  \over \partial \psi_{ia}}\right) 
= \th_{ia}\left(
\part{\th_{ia}}C(\th) - \sum_{u\in\actionset_i} \th_{iu}\, \part{\th_{ia}}C(\th)
\right), \label{eq:bbp}
\end{align}
which is 
identical to the {\em Generalized Gradient} of \cite{Vaz99}. In our report \cite{GVK03},
we explain why this formulation yields the appropriate descent directional derivative
for canonical coordinates of in Sec.\ref{sec:prr}.
See also \cite{Vaz99} and references therein.
However, gradient algorithms based on this parameterization
 can exhibit slow convergence,
particularly when the optimal probability vector $\th^*$ is
degenerate. When  a component of 
$\th_{ia}$  is zero, (\ref{eq:bbp}) is zero and hence a gradient
based algorithm remains at this point. Because the drift of the update is
proportional to the size of the updated component, as a component
approaches zero, the magnitude of future updates decreases (to prevent
crossing outside the feasible set). This mechanism slows down
convergence of the gradient
algorithm using canonical coordinates $\psi=\th$,
 particularly close to the optimal solution if this has
components representing pure strategies, as is often the case.
We refer the reader to \cite{GVK03} for
numerical examples that demonstrate that the
parameterization involving spherical coordinates has superior
convergence properties compared to canonical coordinates.

In addition, the approach for derivative
estimation in \cite{BB99,BB02} is via the Score Function method, which
usually suffers from unbounded variance for infinite horizon costs. To
alleviate this problem, the authors use a forgetting factor that
introduces a bias in the derivative estimation. Our derivative
estimators are more efficient and consistent, with provably bounded
variance over infinite horizon, in $\Param^{o}$. In Sec.\ref{sec:numerical},
numerical examples show that the variance
of the  score function method is
several orders of magnitude larger than that of the measured valued derivative
estimator.

The above methods all use a ``simulation optimization'' approach,
where almost sure convergence to the true optimal value can be shown
under an appropriate choice of parameters of the algorithms.
In
particular, all stochastic approximations involved in the above
mentioned methodologies use decreasing step size. One of the
motivations of the present work is to implement a stochastic
approximation procedure with constant step size in order for the
controlled Markov chain to be able to deal with tracking slowly
varying external conditions, which result in slowly varying $A(u)$.

\begin{remark}
 Our MDP setting assumes perfect observation of the process
$\{X_n\}$. The paper \cite{BB99,BB02}
considers a partially observed MDP (POMDP) \cite{Kri16}, but assumes
 that the observations $Y_n$ of
the process
$X_n$ belong to a  finite set. In that work they consider suboptimal
 strategies of the form $\th_{ia} = \prob\{u_n=a
\given Y_n = i\}$. Such a policy is clearly
not optimal for a POMDP since the optimal policy is a measurable
function of the history $(Y_1,\ldots,Y_k,u_1,\ldots, u_{k-1})$, which 
is summarized by a continuous-valued information state.
Such  suboptimal POMDP models are  a special case of the problem considered here
and our method can be applied  in a straightforward manner. 
\end{remark}

\section{Deterministic Algorithms for Constrained MDPs}
\label{sec:detalg}

As mentioned above,
 to find the optimal value $\a^*$ defined in (\ref{eq:KT})
(or equivalently, $\th^*$ defined in
(\ref{eq:theta*})), our plan is to use a stochastic approximation
algorithm of the form (\ref{eq:sam1}), (\ref{eq:sam2}). 
A key result in stochastic approximation theory (averaging theory),
see for example \cite{KY03}, states that under suitable regularity and 
stability
conditions, the behavior of the stochastic approximation algorithm is
captured by a {\em deterministic} dynamical system (differential/difference
equation or inclusion) as the step size $\ep$ goes to zero. 
 Thus to design the stochastic approximation algorithms and give
insight into their performance, we will first focus on
designing  deterministic dynamical systems (ODEs) for solving the
constrained MDP \eq{optim}, \eq{soft}. By deterministic we mean
that the objective function and all higher order derivatives
can be exactly computed due to complete knowledge of the parameters of the MDP.
We will construct  suitable ordinary differential equations
(ODEs) whose stable points will be
Kuhn-Tucker points of the optimization problem.

Once the deterministic algorithm 
has been  designed,  the corresponding stochastic
approximation algorithm follows naturally  by replacing the gradient in the
deterministic algorithm with the gradient estimate (which is computed from
the sample path of the Markov chain),
i.e,
by replacing
  $\nablal C$,
$\nablal \G$, $\G$ in the deterministic algorithms presented below
with the estimators
$\widehat{\nablal C}$, $\widehat{\nablal \G}$,
$\widehat{\G}$. These estimates are computed using the parameter free
gradient estimation algorithms given in Sec.\ref{sec:lcmdp}. 
The proofs of convergence of the resulting
stochastic approximation algorithms are given
in Sec.\ref{sec:convergence}.

In this  section, we present a 
 primal dual and an
 augmented Lagrangian algorithm.
Our technical
report \cite{GVK03} also presents a primal
algorithm based on gradient projection which
requires higher computational complexity.

\subsection{First-Order Primal Dual Algorithm} \label{sec:lagrange}
A widely used deterministic optimization method (with extension to
stochastic approximation in \cite[pg.180]{KC78}) for handling
constraints is based on the Lagrange multipliers and uses a
first-order primal dual algorithm \cite[pg 446]{Ber00}. 
First, convert the inequality MDP constraints (\ref{soft}) to equality constraints
by introducing the variables $z = (z_1,\ldots,z_L) \in \Real^L$, so that
$ \G_l(\a) + z_l^2  = 0$, $l=1,\ldots,L$. 
Define the  Lagrangian 
\begin{equation}
\label{Lagrange}
\La(\a,z,\la) \defn C(\a)+ \sum_{l=1}^L \la_l (\G_l(\a) + z_l^2).
\end{equation}
In order to converge, a  primal dual algorithm 
requires the Lagrangian to be locally convex at the the optimum,
i.e., Hessian to be positive definite at the optimum (which is much
more restrictive than the second order sufficiency condition of Assumption
1 in Sec.
\ref{sec:paramMDP}).
  Numerical examples show that this
positive definite condition on the Hessian, which Luenberger 
\cite[pp.397]{Lue84} terms
``local convexity'', seldom holds in the MDP case.
We can ``convexify'' the problem by adding a penalty term to
the objective function (\ref{optim}). The resulting problem is:
$$ \min_{\param\in\Param^\mu,z \in \Real^L} C(\a) 
 + \frac{\rho}{2} \sum_{l=1}^L \left(\G_l(\a) + z_l^2\right)^2, $$
subject to  (\ref{soft}).
Here  $\rho$ denotes a large positive constant.
The optimum of the above problem \cite[pg.429]{Lue84} is identical to that of
(\ref{optim}), (\ref{soft}).
Define the augmented Lagrangian,
\begin{equation} \La_\rho(\param,z,\la) \defn
 C(\a)+ \sum_{l=1}^L \la_l (\G_l(\a) + z_l^2)
 + \frac{\rho}{2} \sum_{l=1}^L \left(\G_l(\param) +z_l^2 \right)^2 .
\label{eq:optimp} \end{equation}
Note that although the original Lagrangian may not be convex near the solution
(and hence the primal dual algorithm does not work),
for sufficiently large $\rho$, the last term in $\La_\rho$ ``convexifies'' the 
Lagrangian. Indeed, for sufficiently large $\rho$, \cite{Ber00} shows that
the augmented Lagrangian is locally convex.
After some further calculations detailed in \cite[pg.396 and 397]{Ber00},
 the primal dual algorithm operating on
$\La_\rho(\param(n),z(n),\la(n))$ reads:
\begin{align}
 \a^{\ep}(n+1) &=\a^{\ep}(n) - \ep \biggl({\nablal C}(\a^{\ep}(n)) +
{\nablal \G}(\a^{\ep}(n)) \max\biggl[0,\la^{\ep}(n) + \rho {\G}(\a^{\ep}(n))
\biggr]\biggr) 
\label{eq:pd1}\\
\la^{\ep}(n+1) &= \max\left[\left(1-\frac{\ep}{\rho}\right)\la^{\ep}(n),\la^{\ep}(n) + \ep {\G}(\a^{\ep}(n))
\right]\label{eq:pd2} \end{align}
where $\ep>0$ denotes the step size and the notation
$z=\max[x,y]$ for any two equal dimensional vectors $x$, $y$
 denotes the vector $z$ with components $z_i = \max[x_i,y_i]$.

\begin{lemma} \label{lem:pdl}
Under Assumption 1,  for sufficiently large $\rho>0$,
there exists $\bar{\ep}>0$, such that for all $\ep \in (0,\bar{\ep}]$,
the sequence $\{\a^{\ep}(n),\la^{\ep}(n)\}$ generated by the primal
dual algorithm (\ref{eq:pd1})
is attracted to a local KT pair $(\a^*, \la^*)$.
\end{lemma}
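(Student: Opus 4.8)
The plan is to treat Lemma~\ref{lem:pdl} as a concrete instance of the deterministic augmented-Lagrangian primal--dual convergence theory of \cite[pp.~396--397, 446]{Ber00}, and to verify that the hypotheses of that theory are supplied by \assum{A2}. The argument has three steps: (i) show that the fixed points of the recursion (\ref{eq:pd1})--(\ref{eq:pd2}) are exactly the local KT pairs of Problem~S1; (ii) show that for $\rho$ large such a pair is a locally asymptotically stable equilibrium of the primal--dual vector field associated with $\La_\rho$; and (iii) conclude that the Euler discretization with small step $\ep$ inherits this local asymptotic stability, so that $(\param^*,\la^*)$ is an attractor.

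First I would carry out the fixed-point characterization. Equating $\la^\ep(n{+}1)=\la^\ep(n)$ in (\ref{eq:pd2}) and reading off the two branches of the $\max$ forces, componentwise, either $\la_l^*=0$ with $\G_l(\param^*)\le 0$, or $\G_l(\param^*)=0$ with $\la_l^*>0$; together these give $\la^*\ge 0$, $\G(\param^*)\le 0$ and complementary slackness $\la_l^*\,\G_l(\param^*)=0$. Under this complementarity one has $\max[0,\la^*+\rho\,\G(\param^*)]=\la^*$, so equating $\param^\ep(n{+}1)=\param^\ep(n)$ in (\ref{eq:pd1}) collapses to the stationarity condition $\nablal C(\param^*)+\nablal\G(\param^*)\,\la^*=0$. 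These are precisely the conditions defining the KT set (\ref{eq:KT}), so the fixed points of (\ref{eq:pd1})--(\ref{eq:pd2}) and the local KT pairs coincide.

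Next I would establish strict local stability. The recursion is the forward-Euler discretization of the saddle flow $\dot\param=-\nablal\La_\rho$, $\dot\la=\nabla_{\la}\La_\rho$, where $\La_\rho$ has already had the slacks $z_l$ eliminated in closed form: minimizing $\La_\rho$ over $z_l^2\ge 0$ gives $z_l^2=\max[0,-\G_l-\la_l/\rho]$, which is exactly what produces the $\max[0,\la+\rho\G]$ term in (\ref{eq:pd1}) and the $(1-\ep/\rho)$ factor in (\ref{eq:pd2}) (the latter via the envelope identity $\nabla_{\la_l}\La_\rho=\max[\G_l,-\la_l/\rho]$). By the regularity and second-order sufficiency in \assum{A2}, strict complementarity holds at $(\param^*,\la^*)$, so on a neighborhood the active set is constant and the two $\max$ operators are locally smooth; the vector field is then continuously differentiable there. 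Invoking \cite[pp.~396--397]{Ber00}, for $\rho$ exceeding a threshold $\bar\rho$ the primal Hessian $\nablap^2\La_\rho(\param^*,\la^*)$ is positive definite on the full parameter space (not merely on the critical subspace appearing in \assum{A2})---the penalty supplies curvature of the form $\rho\,\nablap\G\,\nablap\G^{\p}$ precisely along the active-constraint gradients, where the bare Lagrangian Hessian need not be positive definite---and the resulting Jacobian of the saddle flow at $(\param^*,\la^*)$ is Hurwitz. Hence $(\param^*,\la^*)$ is locally asymptotically stable for the ODE.

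Finally, the discrete-to-continuous passage is routine: the Jacobian of the map (\ref{eq:pd1})--(\ref{eq:pd2}) at the equilibrium is $I-\ep\,J$ with $J$ Hurwitz, so its spectral radius drops strictly below one for all $\ep\in(0,\bep]$ with $\bep>0$ small, making the iteration a local contraction attracted to $(\param^*,\la^*)$. I expect the main obstacle to be the nonsmoothness of the two $\max$ operators: the Jacobian is defined only away from active-set switching surfaces, so the stability argument relies essentially on using strict complementarity (from \assum{A2}) to freeze the active set on a neighborhood of the equilibrium. Without strict complementarity one would instead have to phrase the analysis in terms of a differential inclusion and a generalized (Clarke) Jacobian.
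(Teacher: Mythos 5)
Your proposal is correct and follows essentially the same route as the paper: the paper's proof is a two-line citation of the local convexity of $\La_\rho$ for large $\rho$ (Luenberger) together with Proposition 4.4.2 of \cite{Ber00}, and your three steps simply unpack what that cited result delivers (fixed points $=$ KT pairs, Hurwitz Jacobian of the saddle flow once $\rho\,\nablap\G\,\nablap\G^{\p}$ convexifies the primal Hessian, and stability of the Euler iterates for small $\ep$). The only caveat worth flagging is that strict complementarity, which you use to freeze the active set and smooth the $\max$ operators, is not explicitly asserted in Assumption~\ref{assum:A2} (the second-order condition there is stated only for constraints with $\mu_l>0$), though the paper's own proof glosses over this point identically by deferring to \cite{Ber00}.
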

\begin{proof}  Since $\La_\rho$ is convex for sufficiently
large $\rho>0$ \cite{Lue84}, the proof straightforwardly
follows from 
 Proposition 4.4.2 in \cite{Ber00}.
\end{proof}

Let $T \in \Real^+$ denote a fixed constant and $t\in [0,T]$ denote
the continuous time. Define the piecewise constant interpolated
continuous-time process
\begin{align} \label{eq:ainterpolate}
 \a^{\ep}(t) &= \a^{\ep}(n)  \quad t \in [n\ep, (n+1) \ep) \\
\la^{\ep}(t) &= \la^{\ep}(n) \quad t \in [n\ep, (n+1) \ep).
\label{eq:linterpolate}
\end{align}

Lemma \ref{lem:pdl}
 implies that $\{\la^{\ep}(n)\}$ lies in a compact set $\Lambda$ for all $n$. Note
 $\a^{\ep}(n) \in \Param^\mu$ by definition  where
 $\Param^\mu$ is compact.
Then the following result directly follows from the above
lemma and \cite{KC78} where
the convergence of the  stochastic version is proved.

\begin{theorem} \label{thm:pdconv} Under Assumption 1,
the interpolated process $\{\a^{\ep}(t),
\la^{\ep}(t)\}$ defined in (\ref{eq:ainterpolate}), (\ref{eq:linterpolate})
converges uniformly as $\ep \rightarrow 0$ to the process 
$\{\a(t), \la(t)\}$, i.e., 
\begin{equation}
\lim_{\ep \downarrow 0} \sup_{0 < t \leq T} |\a^{\ep}(t) - \a(t)| = 0 ,
\;\lim_{\ep \downarrow 0} \sup_{0 < t \leq T} |\la^{\ep}(t) - \la(t)| = 0 ,
\label{eq:uniform}
\end{equation}
 where $\{\a(t), \la(t)\}$ satisfy the ODEs
\begin{align} \label{eq:odepd}
 \frac{d}{dt} \a(t)  &= - \nablal C(\a(t)) - \nablal \G(\a(t)) \max\biggl[0,\la(t)
+ \rho \G(\a(t))\biggr]  \nonumber\\
 \frac{d}{dt} \la_l(t) &= 
\begin{cases}\G_l(\a(t))  & \text{ if }\la_l(t) +\rho \G_l(\a(t)) \geq 0\\
               \la_l(t)/\rho    & \text{ if }
\la_l(t) + \rho \G_l(\a(t)) < 0
                              \end{cases}
,\quad l=1,\ldots, L.
\end{align}
The attraction point of this ODE is the local KT pair  $(\a^*, \la^*)$.
\end{theorem}

\subsection{Augmented Lagrangian (Multiplier) Algorithms} \label{sec:multiplier}
We outline two  augmented Lagrangian (multiplier) algorithms. 

\noindent{\bsl 1. Inexact Primal Minimization Multiplier Algorithm:}
The augmented Lagrangian approach
(also known as  a multiplier method) consists of the following coupled 
ODE and difference equation:
\begin{align} \label{eq:diffeqn}
 \frac{d\a^{(n+1)}(t)}{dt} &= -\nablal C(\a^{(n+1)}(t))  - \nablal \G(\a^{(n+1)}(t))
 \max\left[0,\la(n)
+ \rho \G(\a^{(n+1)}(t)\right]  \\
 \la_l(n+1) &= \max\left[0,\;\la_l(n) + \rho \G_l(\a^{(n+1)}(\infty)) \right],
\quad l=1,\ldots,L, \label{eq:multnew}
\end{align}
where $\a^{(n+1)}(\infty)$ denotes the stable point of the ODE (\ref{eq:diffeqn}).
Iteration (\ref{eq:multnew}) is a first order update for the multiplier,
while (\ref{eq:diffeqn}) represents an ODE which
is attracted to the minimum of 
the augmented Lagrangian $\La_\rho$.  The $\max$ in (\ref{eq:multnew})
 arises in dealing
with the inequality constraints, see \cite[pp.396]{Ber00}. 
 \cite[Proposition 4.2.3]{Ber00} shows that if $(\a^{(0)}(0),\la_0(0))$ lies
in the domain of attraction of a local KT pair
$(\a^*,\la^*)$, then (\ref{eq:diffeqn}), (\ref{eq:multnew})
converges to this KT pair.
A practical
alternative to the above exact primal minimization 
is  first order {\em inexact minimization} of the
primal. The iterative
version of the  algorithm reads
\cite[pg.406]{Ber00}: At time $n+1$ set $\a^{(0)}(n+1) = \a(n)$.
Then run $j=0,\ldots,J-1$ iterations of the following gradient minimization of the
primal
\begin{align}
\a^{(j+1)}(n+1) &= \a^{(j)}(n+1) - \ep \biggl( \nablal  C(\a^{(j)}(n)) +
{\nablal \G}(\a^{(j)}(n)) \max\left[0,\la(n) + \rho {\G}(\a^{(j)}(n))\right]\biggr)
 ,
\label{eq:mult1} \\
\intertext{$ \a(n+1) = \a^{(J)}(n+1)$
followed by a first order multiplier step}
\la_l(n+1) &= \max\left[0,\;\la_l(n) + \rho \G_l(\a(n+1)) \right],
\quad l=1,\ldots,L. \label{eq:mult2}
\end{align}
Iteration (\ref{eq:mult1}) represents a first order fixed step size
inexact minimization of the augmented Lagrangian $\La_\rho$ ({\em
  inexact} because (\ref{eq:mult1}) is terminated after a finite
number of steps $J$). It is
shown,  see \cite{Ber00} and references therein,
that as long as the inexact minimization of the
primal is done such that the error tolerances are decreasing with $n$
but summable, then the algorithm converges to a Kuhn Tucker point.
Also  \cite{HSS01} shows that even if the error tolerances are
fixed (i.e., non-decreasing), convergence can be shown for $n
\rightarrow \infty$.

\medskip

\noindent {\bsl 2. Fixed Multiplier}:
 A trivial case of the multiplier algorithm is to fix  $\la(n) =
\bar{\la}$ for all time
$n$ and only
update $\a$ according to (\ref{eq:mult1}) with $I=1$ iteration at
each time instant. This is clearly equivalent to the primal update
(\ref{eq:pd1}) with fixed $\la(n) = \bar{\la}$.
From Theorem \ref{thm:pdconv}, the interpolated trajectory of this
algorithm converges uniformly as $\ep\rightarrow 0$
to the trajectory of the 
ODE
\begin{equation}
 \frac{d}{dt} \a(t)  = - \nablal C(\a(t)) - \nablal \G(\a(t)) \max\left[0,
\bar{\la}
+ \rho \G(\a(t))\right].
\label{eq:fixedmult} 
\end{equation}
The following result in \cite{Ber00}
shows that the attraction point of this   ODE is close
to $\a^*$ for sufficiently large $\rho$, resulting in a near optimal
solution. First convert the $L$ inequality constraints to equality
constraints as outlined in Sec.\ref{sec:lagrange}.
Let $\param^*, \la^*$ denote the corresponding
KT pair.

\begin{result} \label{res:mult}\cite[Proposition 4.2.3]{Ber00}.
Let $\bar{\rho} >0$ be scalar such that $\nablap^2 \La_{\rho}(\param^*, {\la}^*) >0$.
Then there exist positive scalars $\delta$ and $K$ such that
for  $(\bar{\la},\rho) \in D \in \Real^{L+1}$ defined by
$$ D = \{(\la,\rho): \|\la - \la^*\| < \delta \rho, \; \rho \geq \bar{\rho}
\}$$
the attraction point $\a^{\la,\rho}$  of the ODE (\ref{eq:fixedmult})
is unique. Moreover, 
$ \|\a^{\la,\rho} - \a^*\| \leq K (\|\bar{\la} - \la^*\|)/\rho$. 
\end{result}

\section{Measure-Valued Gradient Estimation} \label{sec:mvg}
In this section we focus on the derivation of the general formulas for measured-valued gradient estimation
of
 Markov chains. We also
discuss implementation aspects of the ensuing formulas.
In Sec.\ref{sec:lcmdp} we will use these formulas to devise parameter free 
(learning) gradient
estimators  $\widehat{\nablal C}$ and  $\widehat{\nablal \G}$.

 \noindent {\bf Notation}: In this
section we will use 
$\Ep$ to denote expectation w.r.t the underlying probability
 measure of $\{Z_n, n=1,2,\ldots\}$.
 We will also use $\E$ to denote expectation w.r.t. to 
 the distribution of the random action, given that the state is $X_n=i$. 
  certain simulated random variables called {\em phantoms}.
Finally, $Z_n$ will be refereed to as the ``{\em nominal process}.''

\subsection{Measure-Valued Gradient Estimators and Implementation}
 Recall that the transition probability of the chain $\{Z_n\}$ given by
(\ref{kernel}) is parametrized by
$\param$. Denote by $\nablap$ the gradient w.r.t.~the multidimensional parameter
$\param$.
 In \cite{HV2002} it is
shown that the weak derivative of the $n$-th step transition expectation can be
calculated using the chain rule for differentiation, just as in
ordinary calculus; that is, for any test function
$F:\Z^n\to\Real$,
\begin{multline}
     \nablap \Ep [ F(\bar Z)] =
             \nablap \left(
               \sum_{\bar i\in S^n} \sum_{\bar u \in \actionset^n} F(\bar i,\bar u)
                \prod_{k=1}^n P_{i_{k-1}, u_{k-1},i_{k},u_k}(\param) 
               \right) \\       
        =     \sum_{k=1}^n\,  \left( 
              \sum_{\bar i\in S^n} \sum_{\bar u \in \actionset^n}
             F(\bar i, \bar u) \, \prod_{l=1}^{k-1} P_{i_{l-1},,u_{l-1},i_{l},u_l}(\param)
  \,
                \nablap P_{i_{k-1},u_{k-1},i_{k},u_k}(\param) \,
                \prod_{l=k+1}^{n} P_{i_{l-1},u_{l-1},i_{l},u_l}(\param)  
                 \right), \label{eq:chain}
\end{multline}
where $\bar Z = (Z_1,\ldots, Z_n)$, $\bar{i} = (i_1,\ldots,i_n)$, $i_k \in S$, 
$\bar{u} = (u_1,\ldots,u_n)$, $u_k \in \actionset_{i_{k}}$,
and each component of $\nablap P(\param)$ is the weak
derivative of the kernel $P(\param)$ w.r.t.~each component of $\param$, as we explain
shortly. While it is a matrix, it does
not  define a transition probability (the rows do not add up to one,  they add up to
zero) so it is not possible to interpret the expression above directly in terms of
``transitions''
to states $i_{k},u_{k}$ starting at $i_{k-1},u_{k-1}$. Using  the concept of weak
derivatives (see \cite{Pfl96}), the transition kernels
$\nablap P_{i_{k-1},u_{k-1}, i_k,u_k}(\param)$ can be interpreted as the weighted
{\em difference} between two
transition probabilities for the random variable $Z_k$, as we now show.

The problem is to find a 
formula for the derivative of the one-step expectation
$\Ep[f(Z_{k+1} ) | Z_k=i_k]$ for {\em any} real valued test function $f \colon
\Z\to\Real$. Using \eq{kernel},  we have 
\begin{equation}
\label{onestepderivative}
\nablap\Ep[f(Z_{k+1}) \given Z_k = (i_k,u_k)]
= \nablap \Ep \left[ \sum_{i\in S}f(i, u_{k+1}) A_{i_k,i}(u_k) 
\given Z_k=(i_k,u_k)
\right]
\end{equation}
where a conditioning argument akin to the method in
\cite{HV2002} has been used to isolate the dependency on $\th(\param)$: given the
state and action pair
$Z_k$ the only dependency on $\param$ is in the distribution of $u_{k+1}$. It
then suffices to evaluate $\nablap \E[ f(i,u)]$ for each fixed value of $i$, with
$\prob[u=a]=\th_{ia}(\param)$. 

In the case of spherical coordinates, the action $u_{k+1}$ (given $i_{k+1}=i$) has a
distribution
\begin{eqnarray}
   u_{k+1} &= & \begin{cases}
            0 & \text{ w.p. } \cos^2 (\a_{i1})\\
             Y_1 & \text{ w.p. } \sin^2 (\a_{i1})
  \end{cases} 
   \label{representation}\\
 Y_1 &= &
\begin{cases}
1 & \text{w.p. } \cos^2(\a_{i2})\\
Y_2 & \text{w.p. } \sin^2(\a_{i2})
\end{cases} 
\nonumber\\
&\vdots & \nonumber\\
Y_{d(i)-1} &= &
\begin{cases}
d(i)-1 & \text{w.p. } \cos^2(\a_{i, d(i)})\\
d(i) & \text{w.p. } \sin^2(\a_{i, d(i)}).
\end{cases} 
\nonumber   
\end{eqnarray}
Let  $Y_{d(i)}=d(i)$.  Because $\a_{ip}$,
$i \in S$, $p \in \{1,2,\ldots d(i)\}$, does not affect
the distribution of $u_{k+1}$ if $i_{k+1}\ne i$, the gradient is non null only
when $i_{k+1}=i$, in which case we have
\begin{eqnarray*}
\part{\a_{ip}} \E [f(i, u)] &=& \ds
\part{\a_{ip}} \E \biggl[
f(i,p-1) \cos^2(\a_{ip}) + f(i, Y_{p})\sin^2(\a_{ip}) 
\biggr] \prod_{k=1}^{p-1} \sin^2(\a_{ik})\\
&=& -2\sin(\a_{ip})\cos(\a_{ip}) 
\prod_{k=1}^{p-1}\sin^2(\a_{ik}) \E[f(i,p-1) - f(i, Y_p)],
\end{eqnarray*}
because the terms $f(i,k), k<p-1$ have weights which are independent of $\a_{ip}$.
The random variable $Y_p$ is called the ``phantom action'' and it has a distribution 
concentrated on $\{p, \ldots, d(i)\}$ corresponding to
\begin{equation}
  \label{Yp}
\prob(Y_p = a) = {\th_{ia}(\param)\over {\ds\prod_{m=1}^{p-1} \sin^2(\a_{im})}},
\quad a \ge p, \;p \in \{1,2,\ldots d(i)\}.
\end{equation}
Notice that by construction,
for $p=d(i)$ the random variable $Y_{d(i)} = d(i)$ is
degenerate. 

The following theorem gives a measure valued gradient estimator
for $\nablal C(\a)$ (cost gradient). 
An identical estimator holds for $\nablal \G_l(\a)$ (constraint gradient) with
$c(Z_n)$ replaced by $\con_l(Z_n)$.

\begin{theorem} 
\label{thm:MVD2} Fix state $i$ and $\a \in \Param^o$. 
Let $\{Z_n=(X_n, u_n)\}$ be an MDP (nominal process)
governed by \eq{randomized} and (\ref{kernel}).
Also for each $k$,  let
$\{Z_n(k) = (X_n(k), u_n(k));  n\ge 0\}$ denote
  a perturbed version (phantom process)
of the MDP $\{Z_n\}$ that follows the
same transition
rules \eq{randomized} and \eq{kernel}, but with initial state 
$$ 
Z_0(k) = 
\begin{cases}
(i, Y_p) & \text{if $p =1 ,\ldots d(i)-1$  where $Y_p$ is randomly generated according to
\eq{Yp}} \\
(i,u_0(k)) & \text{if $p=d(i)$ where }
u_0(k) = \begin{cases}
d(i)-1 &  \text{ if } u_k=d(i)\\
d(i) &  \text{ if } u_k=d(i)-1.
\end{cases} \end{cases}
$$
Then for  $p=1,\ldots, d(i)$, the following measure valued gradient estimator holds:
\begin{equation}
 \part{\a_{ip}}{ \left[{1\over N} \sum_{n=1}^N  c(Z_n) \right]}  =
\frac{2}{N} \sum_{k=1}^N K_{ip}(\a,k) \;
\Ep\left[
\sum_{n=0}^{\min\{\tau(k), (N-k)\}} [c(Z_{n+k})-c(Z_n(k))]
\right].
   \label{MVDGGrad1}
\end{equation}
Here 
$
\tau(k) = \min\{n>0  : Z_n(k) = Z_{n+k}\}
$  denotes the coupling time, and
\begin{equation} \label{eq:kip}
 K_{ip}(\a,k) = 
\begin{cases}
 -\tan(\a_{ip}) \,\delta_{i,p}(k) & \text{ if } p=1,\ldots,d(i)-1 \\
- \cos(\a_{i,d(i)}) \sin(\a_{i,d(i)})
[\delta_{i,d(i)}(k) - \delta_{i,d(i)+1}(k)] &
\text{ if } p =d(i) \end{cases}
\end{equation}
where $\delta_{ip}(k) = \ind{Z_k =(i,p-1)}$.
\end{theorem}

\begin{remark}
1. Evaluating the phantom processes 
$k$ only for those steps where 
$u_k=p-1$ in the nominal path significantly saves computational effort
and as  will be
discussed in
Sec.\ref{sec:gradients},
forms the basis for parameter-free
gradient  estimation. For those steps, the initial state
of the plus system will
have the same decision as the observed one, that is, 
$u_k^+(k)=u_k=p-1$ and only one other random variable $u^-_k(k)=Y_p$ has to be
simulated.  \\
2. If $\a_{ip}=\pi/2$, then $\th_{i,p-1}=0$ so that $\delta_{i,p}=0$ and
we set $K_{ip}(\pi/2,k) = 0$ in (\ref{MVDGGrad1}).
\end{remark}

\begin{proof}
Consider $p<d(i)$ and call $F(\bar Z)$ the sample average cost $(1/N)\sum_n  c(Z_n)$.
It follows from the chain rule and the development of the one-step transition derivative
kernel that:
\[
 \part{\a_{ip}} \left(\Ep[F(\bar Z)]\right) =  
-2\sin(\a_{ip})\cos(\a_{ip}) 
\prod_{m=1}^{p-1}\sin^2(\a_{im})
\Ep \biggl[\sum_{k=1}^N\, \ind{X_k=i}\, \E[ F(\bar Z^+(k)) -
F(\bar Z^-(k)) ]\biggr],
\]
where for each $k$, $\{Z_n^\pm(k), n\le k\}$ is a Markov process with
transition matrix $\tp(\th)$. Next, the ``plus'' and ``minus'' processes have actions
$u_k^+ = p-1$ and $u_k^-=Y_p$ (with distribution as in \eq{Yp}). Then the evolution
of the processes follows: $\prob[Z^\pm_{k+1}(k)=(j,a')|Z_k^\pm(k)=(i,Y^\pm)]=
A_{ij}(Y^\pm) \th_{ja'} $, and $\{Z_n(k), n> k\}$ again is a Markov
process with transition matrix $\tp(\th)$. 
For each $k$, an instance
of the paths up to step $k$ is the nominal process itself, therefore
choosing $Z^\pm_n(k) = Z_n, n\le k$ will yield the same expectation
for the gradient, and the first terms in the difference of
sample averages cancel out.

Equivalently,
the plus and minus processes can be stated as MDP's where the decision at step $k$ is
``forced'' to have the values
$p-1, Y_p$ respectively, that is, for each $k$ the MDP $Z_n^\pm(k) =
(X_n^\pm(k), u_n^\pm(k))$ evolves according to \eq{randomized} and \eq{kernel}. 
Using this particular representation, the trajectories ``split'' the decisions: the {\em
nominal} decision is the one observed:
$u_k\sim \th_i(\param)$, the decision in the ``plus'' system is $u_k(k)^+=p-1$ and the
decision in the ``minus'' system is distributed according to $ \tilde\th^{(a)}_i$.  From
there on, all processes follow the same dynamics for the MDP, namely equations
\eq{randomized} and \eq{Aij}.

Consider the case  $p<d(i)$.
Sample  the nominal process to obtain an instance of the ``plus'' process, that is,
whenever $u_k=p-1$ we consider the nominal process as the ``plus'' process.
Because   this observation has a sampling rate of $\th_{i,p-1}(\param) =
\cos^2(\a_{ip})\prod_{m=1}^{p-1}\sin^2(\a_{im})$, then 
\begin{align*}
 \part{\a_{ip}}\left(\Ep[F(\bar Z)]\right) = & 
-{2\tan(\a_{ip}) \over N}
\Ep \biggl[\sum_{k=1}^N\, \ind{Z_k=(i,p-1)} 
  \E\left[
\sum_{n=k}^N  [c(Z^+_n(k)) - c(Z^-_n(k))] \right] \biggr].
\end{align*}
The first line of (\ref{eq:kip}) holds by
identifying $X_n(k) = X^-_{k+n}(k), u_n(k) = u^-(k)_{n+k}; n\ge 0$, and using the fact that
for each $k$, after the coupling time $\tau(k)$ both MDPs have the same
distribution. 

Now consider the case $p=d(i)$. Since $Y_p$ is degenerate, we can sample
the nominal process when it has decision $d(i)-1$ or $d(i)$. The event
$u(k) \geq d(i)-1$ happens with probability $\prod_{m=1}^{d(i)-1} \sin^2(\a_{im})$.
If the decision in the nominal is $d(i)$, then the contribution to the 
derivative is negative. Otherwise it is positive. Hence the second line of
(\ref{eq:kip}) holds.
\end{proof}

\subsection{Comparison with other Gradient Estimation Methods}
\label{sec:othergrad}
The aim here is to briefly compare the gradient
estimation formula  (\ref{MVDGGrad1}) in  \thm{MVD2} with two other widely used
gradient estimators in the literature, namely, realization
perturbation factors and the score function estimator.

{\bf Realization Perturbation Factors}:
The realization perturbation factors of \cite{Cao98},
can be used to estimate the difference between the steady state costs
for two different transition probability matrices.
This has been used in \cite{Cao98} to derive a 
simulation based policy iteration algorithm for an unconstrained MDP.
 To relate our formula (\ref{MVDGGrad1}) to the realization perturbation 
formulas
in \cite{Cao98}
 use the following argument.
In the long run, as $N \rightarrow \infty$,
the fraction of steps where $\delta_{ia}(k)=1$ is
$\pi_{i,p-1}(\a)$
that is,
the stationary probability of the chain $Z_n=(X_n,u_n)$. 
With $\tilde{u} \sim Y_p$
satisfying (\ref{Yp})
 and $\tau$ denoting  the coupling time of the two
Markov chains, it can
 be shown that
 \begin{align}
\frac{\partial}{\partial \a_{ip}} \Epi[c(Z)] = & 
-2 \tan(\a_{ip}) \pi_{i,p-1}(\a)\, 
 \E \left[ 
\sum_{n=0}^{\tau}  \Ep[c(Z_n)\given Z_0=(i,a)] - \Ep[c(Z_n)\given Z_0=(i,\tilde u)]
\right] \nonumber\\
&\hspace{-3cm}= -2 \frac{\tan(\a_{ip})}{\prod_{m=1}^{p-1} \sin^2(\a_{im})}
\pi_{i,p-1}(\a) 
\sum_{u=p}^{d(i)} \th_{i,p-1}(\a)\, 
 \E \left[ 
\sum_{n=0}^{\tilde \tau(u)}  \Ep[c(Z_n)\given Z_0=(i,a)] - \Ep[c(Z_n)\given Z_0=(i,
u)]
\right]\label{eqivrp}
\end{align}
for $p=1,\ldots, d(i)-1$, where the last line of
the above equation follows from
using a conditioning argument on the values of $u$.
In (\ref{eqivrp}),
 (abusing notation) $\tilde \tau(u)$ is now the corresponding coupling time of the
processes started at $(i,a)$ and $(i,u)$. Eq.(\ref{eqivrp})
 is the 
spherical coordinate equivalent of weights of  the
perturbation realization factors in terms of stationary probabilities, in \cite{Cao98} and
\cite{Dai00}, see also \cite[Lemma 3.75, pp.203]{Pfl96}.

\medskip

{\bf  Score Function Estimator of Bartlett \& Baxter \cite{BB99,BB02}}:
The Score Function gradient estimator 
of (\ref{eq:bbp})
can  be derived using the measure-valued
approach. Let an arbitrary finite-valued
random variable $Z$ take value $j$ with probability
$p_\th(j)$. Then  
\[
\part{\th_{ia}} F(Z) = \part{\th_{ia} } \sum_j F(j)\, p_\th(j) = \sum_j
\left( \part{\th_{ia}} \ln [p_\th(j)] \right) F(j) p_\th(j) = \Ep[F(Z)
S(\th_{ia},Z)]
\]
where $S(\th_{ia},Z)  \defn  \part{\th_{ia}} \ln [p_\th(Z)]$ is known as the Score Function. 
Returning to the Markov process
$\{Z_n\}$, 
consider the estimation of 
 \eq{onestepderivative}. Then   using $P(Z_{k+1}=(i,a)|Z_k=(j,u)) = 
A_{ji}(u) \th_{ia}$ yields
\[
S(\th_{ia}, Z_{k+1}) = 
\frac{1}{\th_{ia}} \ind{Z_{k+1} = (i,a)},
\]
which is not uniformly bounded in $\th$: when one
or more components of the control parameter tend to zero (which they
do when a policy is pure instead of randomized) the estimator blows
up. To overcome this problem a Score Function estimator is used in
\cite{BB99,BB02} with the exponential parameterization.
 When inserted in the formula (\ref{eq:bbp}) for the chain rule,
the Score Function estimator for the Markov Chain is of the form
$\sum_n S(\th_{ia}, Z_n)$ and it is a well known problem that the
variance increases with time. Numerous variance reduction techniques
have been proposed in the literature \cite{Pfl96} including
regenerative estimation, finite horizon approximations and \cite{BB99,BB02}  propose to use a forgetting factor for
the derivative estimator. Their method suffers therefore of a
variance/bias trade-off, while our estimation method is consistent and
has uniformly bounded variance (in $N$), as will be shown shortly.

\section{Gradient Estimation without explicit knowledge
of parameter values} \label{sec:lcmdp}
In this section we show how to modify the gradient estimation algorithms
of Theorem \ref{thm:MVD2}
to make them parameter free. That is, the algorithms presented
below are simulation based and do not require explicit knowledge of the
transition probabilities of the constrained MDP. The algorithms
use a novel concept called {\em frozen phantoms.}

\noindent {\bf Remark. Doeblin phantoms vs Frozen phantoms}:
Typically in the
Discrete Event Systems literature (see \cite{Cao98,Dai00} and references therein),
it is assumed that simulations can 
 be performed off-line.
Extrapolating this philosophy to the phantom processes $\{Z_n(k)\}$, $k=1,2\ldots,N$
 defined in Theorem \ref{thm:MVD2}, an obvious first attempt
would be to assume that these phantoms can be  simulated off-line.
Following the terminology of Doeblin simulations in \cite{Dai00}, which consists
of independent, off-line simulations, 
we define a {\em Doeblin phantom} process $\{Z_n(k)\}$, $k=1,2\ldots,N$ as follows:
Start  with an observed state $Z_{n+k}=(i,a)$ and let the
 the phantom decision be
$u_0(k)$. Then generate $Z_n(k)$ 
 independently of the observed process $\{Z_{n+k}\}$ 
for $n=0,\ldots,
\tau(k)$
 with
 identical Markovian dynamics.
Due to the coupling property of Markov chains, the Doeblin phantom
and the nominal  process merge at finite (a.s.) time $\tau(k)$.
For $n>\tau(k)$, the Doeblin phantom process is defined to be identical
to the  observed process.
%

Unfortunately, despite the widespread usage of Doeblin simulations, 
Doeblin phantoms are not suitable for our learning problem
since they require explicit knowledge of the transition probability matrix $A(u)$.
When
$A(u)$ is not known, it is of course still possible to try to
estimate it  concurrently with the gradients.
Instead, we propose a new method, called {\em frozen phantoms}
that overcomes this difficulty and
gives the basis for indirect adaptive  control of the constrained MDP.
We also present a short-batch version called {\em fast frozen phantoms} that
can be used in adaptively controlling time varying MDPs.

\subsection{Frozen Phantoms for Gradient Estimation without explicit
knowledge of parameters}
\label{sec:gradients}
If a phantom system
starts at state $i$ and a given decision
$\tilde u\in\actionset_i\setminus \{a\}$,  the history of the process may be used 
as a stochastic version of
this system: for example one can wait until the nominal process  has state-decision pair
$(i,\tilde u)$. 
From then on the cost of the phantom system can be observed from the nominal path,
without the need for simulations.  

Our methodology is as follows:
First, a cut-and-paste argument (see \cite{HC91}) is used. The phantom
system is `frozen' at the initial state for $\nu$ iterations, until the
nominal system hits this phantom state (which happens in finite
time a.s.\ for $\a \in \Param^o$).
 Once the systems couple they follow identical paths until the
nominal system has completed $N$ stages, at which point the phantom
system must complete the $\nu$ remaining steps. The novel idea that we
introduce is to filter these dynamics instead of simulating the remaining
$\nu$ steps of the phantom systems. Filtering the phantoms implies
averaging out the dynamics of the phantom system so no further detailed
simulation is required. Apart from making our gradient
estimation parameter free,
it is  more efficient than  off-line simulations
because no extra CPU time is required.

The theorem below presents a frozen phantom  gradient estimator
version of  (\ref{MVDGGrad1})
and  does not require explicit knowledge of the transition probabilities
of the MDP.  A similar gradient 
 estimator holds for the constraints.

\begin{theorem} \label{thm:frozen}
Consider the MDP  $\{Z_n\}$ governed by
 \eq{randomized} and \eq{kernel} with $\param \in \Param^\mu$.
 Fix $(i,p)$, $a=p-1$. 
 Let $\{\tilde u(k)\}$ be a sequence of iid random variables with distribution 
$Y_p$ in (\ref{representation}) for $p<d(i)$, and for $p=d(i)$, let
$\tilde u_{k} = d(i)\ind{u_{k}=d(i)-1}+(d(i)-1)\ind{u_{k}=d(i)}$,
 independent of
$\Field_n$. Define the hitting time
\begin{equation}
\nu(k) =  \min\{n>  0  : Z_{k+n} = (i, \tilde u(k))\}.
\label{eq:nudef} \end{equation}
Let $ \hat{\Gamma}_N $ denote any estimator that converges
as $N\rightarrow \infty$  to $C(\a)$ a.s.\ (e.g.,
$ \hat{\Gamma}_N = \frac{1}{N} \sum_{n=1}^N  c (Z_n)$) and $K_{ip}(\a,k)$ defined in (\ref{eq:kip}). 
Then for fixed $\a \in \Param^o$, a
parameter free consistent estimator for the gradient in (\ref{MVDGGrad1}) 
is given by:
\begin{align}
\hat G_N (i,p) &=
 \frac{2 }{ N }
 \sum_{k=1}^N K_{ip}(\a,k)
\left( 
[c(i,u_k)-c(i,\tilde u(k))]
+ \sum_{n=k+1}^{k+ \nu(k)}  c(Z_n) - \nu(k)\,   \hat{\Gamma}_N 
\right), \quad p=1,\ldots,d(i).
   \label{GGradEstimator2}
\end{align}
\end{theorem}

\begin{proof}
Consider the homogeneous Markov
chain $\{Z_n\}$. Because for $\param \in \Param^\mu$ the chain is aperiodic and
irreducible in a finite state space, it is geometrically ergodic with
a unique stationary distribution. Hence there
is a
constant  $0<\rho<1$ 
so that for any function $d: S \to\Real$, there is a positive constant
$K_d<\infty$
such that
\[
| \esp[d(Z_n) \given X_0] - \esp[d(Z_\infty) | \le K_d\rho^n \text{ a.s.},
\]
where $Z_\infty$ has the stationary distribution of the chain. This in turn implies that for
each $k$, the sum of the difference processes is absolutely summable: 
\begin{equation}
\sum_{n=1}^\infty | \esp[d(Z_n)\given Z_0=(i,p-1)] - \esp[d(Z_n) \given Z_0= (i,\tilde u)]  |
\le
2 K_d \sum_{n\ge 0}\rho^n < \infty. \label{eq:summable}
\end{equation}
For notational convenience, since for fixed $\param \in \Param^o$,
$K_{ip}(\a,k)$  in (\ref{MVDGGrad1}) is bounded, it suffices to work with
\begin{equation}
 L\defn  \lim_{N\to\infty} \frac{1}{N}\sum_{k=1}^N \delta_{ip}(k)\,\times
\Ep
\left[
\sum_{n=k}^{N} [c(Z_{n})-c(Z_n(k))]
\right] \label{eq:LN}
\end{equation}
instead of  (\ref{MVDGGrad1}).
Then (\ref{eq:summable})  together with the dominated convergence 
theorem can be used to interchange limits and expectations in
(\ref{eq:LN}) and establish that:
\[
\esp[L]
=
\lim_{N\to\infty} 
\frac{1}{N} 
\sum_{k=1}^N \esp\left( \delta_{ip}(k)
\sum_{n=k}^N (c(Z_n) - [c(Z_n(k)])
\right)
\] 
where $Z_n(k)=(X_n(k), u_n(k))$ is started at the point $(i,\tilde u(k))$ which follows
\eq{randomized} and \eq{Aij}. Construct a version of this process via the nominal 
process $\{Z_n\}$:
\[
Z_n(k) = \begin{cases}
(X_n, u_n) & n < k \\
(i, \tilde u) & n=k \ (\text{ if } X_n = i)\\
(X_{n+\nu(k)}, u_{n+\nu(k)}) & n>k .
\end{cases}
\]
The idea of the hitting
time  until the nominal reaches a phantom system is illustrated in 
Fig.\ref{fig:frozen}.

\begin{figure}[h] 
\centering
\setlength{\epsfxsize}{3in}
\epsfbox{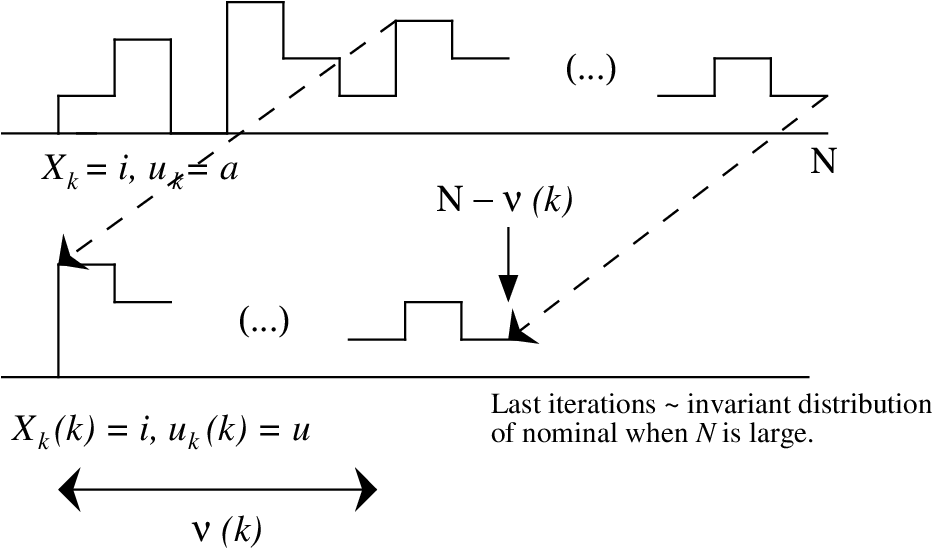}
\caption{Frozen phantoms wait until nominal hits the initial state.}
\label{fig:frozen}
\end{figure}

Using this version of the process, for each $k$, the difference of the finite horizon sum in
the inner brackets is:
\begin{align*}
& c(i,p-1) - c(i,\tilde u(k)) + 
\sum_{n=k+1}^N c(Z_n) - \sum_{n=k+ 1+\nu(k) }^{N + \nu(k)} c(Z_n)\\
&=c(i,p-1) - c(i,\tilde u(k)) + 
\sum_{n=k+1}^{\min\{(k+\nu(k)),N\}} c(Z_n) - \ind{k+\nu(k)<N}
\sum_{n=N+1}^{N + \nu(k)} c(Z_n).\\
\end{align*}
We now show that 
\begin{equation}
\label{bound}
\lim_{N\to\infty} {1\over N} \sum_{k=1}^N \delta_{ip}(k)
\esp \left( \ind{k+\nu(k) >N}\sum_{n=N+1}^{N+\nu(k)} | c(Z_n) |
\right) = 0.
\end{equation}

First, notice that $|c(Z_n)|<K_1$ is uniformly bounded because the
state space is finite. 
Define $\tau(u) = \inf\{k\geq 1, \;Z_k=(i,u)\}$ as the first return
time to state $(i,u)$.
Clearly for each value of $ u\in
\actionset_i$, the hitting time until the first return to $(i,u)$
(starting from $(i,a)$) is bounded a.s.~by $\tau(u)$.
Because $\nu(k)$ is a
first hitting time for some $\tilde u(k) \in \actionset_i$, then
since $\a \in \Param^o$, it follows that
$\nu(k)\le \tau$ a.s., where $\tau = \max(\tau(u) \colon
u\in\actionset_i)$ is finite a.s. 
This implies that for any $N$, the sum in
\eq{bound} is bounded by: 
\[
K_1\, \esp \frac{\tau}{N}\left( 
\sum_{k=1}^N \delta_{ip}(k) 
\ind{\tau> N-k }
\right).
\]
The Markov chain
$\{Z_n\}$ is positive recurrent, implying that $\tau$ is a.s.~finite and
$\esp\tau = 1/\pi_{iu}(\alpha))< \infty$ for some $u\in\actionset_i$. Therefore for any
$\ep>0$ there exists $b_\ep<\infty$ (independent of $N$) such that $\esp[\tau \ind{\tau>b}]
<\ep$. Take $N> b_\ep$ and calculate
\begin{align*}
& K_1\, \esp {\tau\over N} \left( 
\sum_{k=1}^N \delta_{ip}(k) 
\ind{\tau> N-k }
\right) \\
& \le 
K_1\, \esp {\tau\over N}\left( 
\sum_{k=1}^{N-b_\ep} \ind{\tau > N-k} 
+ \sum_{k=N-b_\ep+1}^{N}\ind{\tau> N-k }
\right)\\
& \le 
K_1\, \esp {\tau\over N}\left( 
\sum_{k=1}^{N-b_\ep} \ind{\tau > b_\ep} 
+ \sum_{k=0}^{b_\ep}\ind{\tau> k }
\right)\\
&\le K_1\, \left(\ep +{b_\ep \esp\tau\over N}\right),
\end{align*}
which shows \eq{bound}. With \eq{bound}, it follows that
\[
\esp[L] = \lim_{N\to\infty} {1\over N} \esp\left[
\sum_{k=1}^N \delta_{ip}(k) 
 \left(
c(i,u_k) - c(i,\tilde u(k) ) + 
\sum_{n=k+1}^{k+\nu(k)} c(Z_n) -  
\sum_{n=N+1}^{N + \nu(k)} c(Z_n)
\right)\right].
\]
 
By the Markov property, the last sum above can also be expressed by $\sum_{m=0}^\infty
c(Z^{(N)}_m)
\ind{m<\nu(k)}$, where $Z^{(N)}_0\sim \PP^N$ has the distribution of the $N$-step
transition of the nominal chain. This distribution converges to the invariant distribution as
$N$
grows.
Because $ \{Z_k\}$ is time  homegeneous,
the distribution of  $\nu(k)$ (see (\ref{eq:nudef}))
is independent of $k$. Therefore,
\[
\lim_{N\to\infty} \Ep \biggl[\sum_{m=0}^\infty c(Z^{(N)}_m) \ind{m<\nu(k)} 
\biggr]
=
\Ep \biggl[\sum_{m=0}^\infty C(\a) \ind{m<\nu(k)}\biggr] = C(\a) \Ep[\nu(k)],
\] 
which completes the proof, because $\nu(k)$ are  
uniformly bounded in $N$ and $ \hat{\Gamma}_N  \to C(\a)$ a.s. so the expectation of the
product will converge to  $C(\a) \Ep[\nu(k)]$.
\end{proof}

\subsection{Numerical Comparison of Efficiency of Gradient Estimators}
\label{sec:numerical}

\noindent {\bf System Parameters}:
 We simulated the
following MDP: $S=\{1,2\}$ (2 states), $d(i)+1=3$ (3 actions),
\[
A(0) = 
\begin{pmatrix}
0.9 & 0.1 \\
0.2 & 0.8 
\end{pmatrix}, \;\;
A(1) =
\begin{pmatrix}
0.3 &  0.7\cr
0.6 & 0.4 \cr
\end{pmatrix}, \;\;A(2) = 
\begin{pmatrix}
0.5 &  0.5\cr
0.1 & 0.9 \cr
\end{pmatrix}.
\]
The action probability matrix  $(\th(i,a))$ and cost 
matrix $(c(i,a))$ were chosen as:
\[  ( \th(i,a)) = \begin{bmatrix}
0.2 &  0.6 &  0.2 \\ 0.4&  0.4&   0.2 \end{bmatrix},\quad
(c(i,a)) = 
-\begin{bmatrix}
50.0 &   200.0 &  10.0 \\ 
3.0 &    500.0 &   0.0 \end{bmatrix}
\]

\noindent {\bf Gradient Estimates in Spherical Coordinates}:
The theoretical values of the 
gradients are 
$$ \nabla_\a C_N(\a) =
\begin{pmatrix}
45.05 &  -55.07 \\
187.58   & -159.91    
\end{pmatrix} $$
The gradient estimates for  batch sizes $N=100, 1000$ are: 
\begin{align*}
\widehat{\nablal C}^{\text{Doeblin}}_{100} &=
\begin{pmatrix}
41.720 \pm 4.695    &  -55.096 \pm 1.464 \\
197.166 \pm 8.062   &  -164.701 \pm 4.610
\end{pmatrix}\\
\widehat{\nablal C}^{\text{Doeblin}}_{1000} &= 
\begin{pmatrix}
  41.703 \pm 1.278   &  -53.667 \pm 0.455 \\
191.249 \pm 2.858   &  -167.048 \pm 1.228
\end{pmatrix}\\
 \widehat{\nablal C}^{\text{Frozen}}_{100} &=
\begin{pmatrix}
43.333 \pm 4.791    &  -54.191  \pm 2.096 \\      
196.956 \pm 8.951   &  -161.200 \pm 4.918
\end{pmatrix}\\
\widehat{\nablal C}^{\text{Frozen}}_{1000} &= 
\begin{pmatrix}
44.322 \pm 1.323    &  -53.656 \pm 0.712\\
189.549\pm 2.829  &  -164.329 \pm 1.231
\end{pmatrix}
\end{align*}
In the above expression, the numbers following the $\pm$ sign
are confidence
intervals which were estimated at level
$0.05$ using the normal approximation with $100$ batches.

\begin{table}[h]
\centering
\begin{tabular}{|c| c | c  || c | c |  } \hline
$N=1000$  & \multicolumn{2}{c|}{$\var [\widehat{\nablal
C}^{\text{Doeblin}}_N]$}  & 
  \multicolumn{2}{|c|}{$\var [\widehat{\nablal C}^{\text{Frozen}}_N]$} \\
 \hline
$ i = 0 $ & 42.558 &   5.404  &  45.604 &  13.206 \\
 $i = 1 $ & 212.74 & 39.26   &  208.43 & 39.431  \\
\hline
CPU & \multicolumn{2}{|c|}{4 secs.}  & \multicolumn{2}{c|}{2 secs.} \\
\hline
\end{tabular}
\caption{Variance of Doeblin phantom vs Frozen phantom, spherical coordinates}
\label{tab:PhanAlpha}
\end{table}

The variance matrix is in \tab{PhanAlpha}.  It is clear from the
numerical experiments that the frozen phantom implementation not only is robust
(thus more appropriate when the underlying parameters are unknown) but
also more efficient; the CPU time is about half of that using Doeblin
phantoms, yet their variances are comparable.

\noindent{\bf  Frozen Phantoms versus Score Function Method}:
As mentioned in Sec.\ref{sec:approaches} the closest approach to the algorithms
in this paper is that in \cite{BB99,BB02}, which uses a Score Function method to
estimate the gradients. Here we compare our frozen phantom estimator
with the score function gradient estimator of \cite{BB99,BB02}.
 Since the score function
gradient estimator  in \cite{BB99,BB02} uses canonical coordinates $\th$,
to make a fair comparison
in this example we work with canonical coordinates.
The theoretical values of the generalized gradient (\ref{eq:bbp})
for the above MDP are
\begin{equation} \label{eq:theorval}
\nabla_{\psi}[C(\th(\psi))]
=\begin{pmatrix}
  -9.010  & 18.680 &   -9.670  \\    
-45.947  & 68.323 & -22.377            
\end{pmatrix}. 
\end{equation}
We simulated the frozen phantom  and
score function estimators for (\ref{eq:bbp}) in canonical coordinates,
see \cite{GVK03} for 
 implementation details.
 For 
batch sizes $N=100$ and $1000$,  the 
frozen phantom gradient estimates are
\begin{align*}
 \GF_{100} &=
\begin{pmatrix}
-7.851 \pm 0.618    &  17.275 \pm 0.664   &  -9.425 \pm 0.594 \\
-44.586 \pm 1.661 &  66.751 \pm 1.657    &  -22.164\pm 1.732 \\
\end{pmatrix}\\
\GF_{1000} &= 
\begin{pmatrix}
 -8.361 \pm 0.215    &  17.928 \pm 0.240    &  -9.566 \pm 0.211 \\
 -46.164 \pm 0.468   &  68.969 \pm 0.472    &  -22.805 \pm 0.539 \\
\end{pmatrix}.
\end{align*}
Again the numbers after $\pm$ above, denote the confidence
intervals at level
$0.05$  with $100$ batches.
The variance of the frozen phantom gradient estimator  is shown in
\tab{PhanTheta}, together with the corresponding CPU time. 
\begin{table}[h]
\centering
\begin{tabular}{|c|| c | c | c| } \hline
$N=1000$  &  
  \multicolumn{3}{|c|}{$\var [\GF_N]$} \\
 \hline
$ i = 0 $ & 1.180 &  1.506 & 1.159   \\
 $i = 1 $ &  5.700 & 5.800 &  7.565 \\
\hline
CPU   & \multicolumn{3}{c|}{2 secs.} \\
\hline
\end{tabular}
\caption{Variance of Frozen phantom in  canonical coordinates}
\label{tab:PhanTheta}
\end{table}

We implemented the score function gradient estimator of \cite{BB99,BB02}
with the  following
parameters: forgetting
factor $1$ (otherwise the estimates are biased),  batch
sizes of $N=1000$ and 10000. In both cases a total number of $10,000$
batches were simulated.
The score function gradient estimates are
\begin{align*}
 \GS_{10000} &=
\begin{pmatrix}
 -3.49  \pm 5.83     &  16.91 \pm 7.17 & -13.42 \pm 5.83 \\
 -41.20 \pm 14.96 &  53.24 \pm 15.0  &  -12.12 \pm 12.24 
\end{pmatrix}\\
 \GS_{1000} &= 
\begin{pmatrix}
-6.73 \pm 1.84 & 19.67 \pm 2.26 &  -12.93 \pm 1.85 \\
-31.49 \pm 4.77 & 46.05 \pm 4.75 &  -14.55 \pm 3.88
\end{pmatrix}
\end{align*}
The variance of the score function gradient estimates are given
Table \ref{tab:PhanScore}. 
\begin{table}[h]
\centering
\begin{tabular}{|c| c | c | c |} \hline
$N=1000$  & \multicolumn{3}{c|}{$\var [\GS_N]$}  \\
 \hline
$ i = 0 $ & 89083 & 135860 & 89500  \\
 $i = 1 $ & 584012 & 593443 & 393015 \\
\hline
CPU & \multicolumn{3}{|c|}{1374 secs.}  \\
\hline
\end{tabular}
\begin{tabular}{|c| c | c | c |} \hline
$N=10000$  & \multicolumn{3}{c|}{$\var [\GS_N]$}  \\
 \hline
$ i = 0 $ & 876523 & 1310900 & 880255  \\
 $i = 1 $ & 5841196 & 5906325 & 3882805 \\
\hline
CPU & \multicolumn{3}{|c|}{13492  secs.}  \\
\hline
\end{tabular}
\caption{Variance of Score Function estimator}
\label{tab:PhanScore}
\end{table}

Notice that even with substantially larger batch sizes and number of batches
(and hence computational time),
the variance of the score function estimator is orders of magnitude larger
than the frozen phantom estimator.

\subsection{Fast Frozen Phantoms for Tracking Time-Varying MDPs}
\label{sec:fastfrozen}
In Theorem \ref{thm:frozen},  consistency of the frozen phantom
estimator for large
sample size $N$ was established. However, for adaptive control
of constrained MDPs with time varying transition probabilities,
it is necessary to implement the frozen phantoms over 
 small batch sizes of the observed system trajectory
 so that the iterates of the stochastic 
gradient
algorithm are  performed more frequently to track the optimal time varying 
$\a^*$.
The aim of this section is to present an implementation of the frozen
phantoms over short batch sizes $N$ and to show that the resulting
gradient estimate is still consistent.
We call these as ``fast frozen phantoms''.

The implementation of the fast frozen phantom over short batch
sizes proceeds as follows:
Suppose that the gradient $\nablal C(\alpha)$ is to be
estimated using the observed MDP trajectory over the $n$th batch
$I_n\equiv\{ nN +1 ,\ldots, (n+1)N\}$. As in
Theorem~\ref{thm:frozen},
let $\widehat \Gamma_n$ denote
an estimator of
$C(\a)$ using the observed trajectory of the MDP in $I_n$. The 
fast frozen phantom estimators for
the components
$(i,p)$, $i \in S$, $p \in \{1,2,\ldots d(i)\}$,
 of the gradient are (compare with (\ref{GGradEstimator2})):
\begin{multline} 
\label{eq:gradient} 
\widehat{\widehat{G}}_n(i,p) = \frac{2}{N} \biggl[
\sum_{k=nN+1}^{(n+1)N} K_{ip}(\a,k)  
\biggl [ c(i,u_k) - c(i,\tilde u_k)  +
\sum_{j= k+1}^{\min\{(k+\nu(k)), (n+1) N\}} 
\hskip -2em c(Z_j) - 
\nu(k) {\widehat \Gamma}_n 
{\cal D}_n(k) 
\bigr]   \\
+ \sum_{k \in{\cal L}(nN)} K_{ip}(\a,k) 
\biggl(
\sum_{j=nN+1}^{\min\{(k+\nu(k)), (n+1) N\}}
\hskip -2em c(Z_j) - 
\nu(k) {\widehat \Gamma}_n 
{\cal D}_n(k)
\biggr) \biggr]
\end{multline} 
where
${\cal D}_n(k) = \ind{k+\nu(k)\in I_n}$ (phantom $k$ dies  in  $I_n$),
and ${\cal L}(j) = \{ k\le j \colon \nu(k) + k > j\}$ is the list of
living phantoms at stage $j$. A similar estimator holds for the gradient of the constraints.

The interpretation of (\ref{eq:gradient}) is as follows.  At each step
 $k$, the state $Z_k=(i,a)$ is observed and a new phantom system
 (labelled by $k$) is started, generating the phantom decision $\tilde
 u_k$ as described above.  The $k$-th phantom system ``dies'' at the
 hitting time $n= k+\nu(k)$, otherwise it is contained in the set of
 ``living'' phantoms ${\cal L}(n)$.  In a computer program, this
 corresponds to a list.  This phantom will be used to estimate the
 partial derivative of all the functions (cost and constraints) with
 respect to $\alpha_{i,a+1}$ if $a<d(i)-1$. If $a=d(i)-1$ or $a=d(i)$,
 the corresponding phantom system contributes (with opposite signs) to
 the estimation of the gradient w.r.t. $\alpha_{i,d(i)}$.  The difference in costs 
 inside the brackets in (\ref{eq:gradient}) contains the initial
 contribution of a phantom system. Afterwards, while a phantom system
 $k$ is alive, it contributes to (\ref{eq:gradient}) the term
 $c(Z_j)$ at each step, and when it dies ($ {\cal D}_n(k)=1$) it
 contributes the term $\nu(k) \widehat{\Gamma}_n$ (if death occurs within the
 interval $I_n$). The above equation takes only observations of the
 trajectory within the {\em current} interval, thus a final term
 appears considering the contributions of all the living phantoms at
 the start of the interval, because it is possible that phantom
 systems may survive several estimation intervals.
For complete details on the implementation program code and other variations please
see \cite{VKM03}.

\begin{theorem}
\label{cor:bias}
Assume that for any $\a \in \Param^o$,
$\Epi \widehat \Gamma _n = C(\a)$.
 Then the bias of the fast frozen phantom
gradient estimator $\widehat{\widehat{G}}_n(i,p)$ 
of (\ref{eq:gradient}) 
 per batch of size $N$ is given by (with $K_{ip}(\a,k)$ defined in (\ref{eq:kip}))
\begin{equation}
\label{eq:bias}
\Epi [\widehat{\widehat{G}}_n(i,p)] -
\nabla_\a C(\a) = \frac{2}{N}
\sum_{k \colon k+\nu(k)\in I_n}K_{ip}(\a,k)\, \covpi(\nu(k),
\widehat
\Gamma _n).
\end{equation}
\end{theorem}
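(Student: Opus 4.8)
The plan is to isolate the single source of bias by comparing $\widehat{\widehat{G}}_n(i,p)$ against the same estimator in which the empirical cost $\widehat\Gamma_n$ is replaced by its mean $C(\a)$. Write $\widehat{\widehat{G}}_n^{\,0}(i,p)$ for this ``idealized'' estimator, obtained from (\ref{eq:gradient}) by substituting $C(\a)$ for every occurrence of $\widehat\Gamma_n$. Since (\ref{eq:gradient}) is linear in the death-subtraction term, the two estimators differ only on the phantoms whose death indicator ${\cal D}_n(k)=\ind{k+\nu(k)\in I_n}$ equals one, and
\[
\widehat{\widehat{G}}_n(i,p) - \widehat{\widehat{G}}_n^{\,0}(i,p)
= -\frac{2}{N}\sum_{k\,:\,k+\nu(k)\in I_n} K_{ip}(\a,k)\,\nu(k)\,\bigl(\widehat\Gamma_n - C(\a)\bigr).
\]
The summation ranges over all phantoms that die inside the batch, whether born in $I_n$ or carried over in the living-phantom list ${\cal L}(nN)$; this is exactly the index set appearing in (\ref{eq:bias}).

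First I would show that the idealized estimator is unbiased up to $o(1/N)$, i.e. $\Epi[\widehat{\widehat{G}}_n^{\,0}(i,p)] = \nabla_\a C(\a) + o(1/N)$. This rests on the three ingredients already assembled for Theorem~\ref{thm:frozen}: (i) the one-step weak-derivative identity displayed just above (\ref{eq:nudef}), which converts $\partial_{\a_{ip}}\Epi[f(Z_{k+1})]$ into the factor $K_{ip}(\a,k)$ times a nominal-minus-phantom cost difference; (ii) the coupling of the nominal and phantom trajectories at the hitting time $\nu(k)$, so that $\sum_{j=k+1}^{k+\nu(k)}\bigl(c(Z_j)-C(\a)\bigr)$ is an unbiased estimate of the difference of realization (potential) factors between the two coupled paths; and (iii) a stationary flux-balance argument at the batch boundaries. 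The role of (iii) is that the living-phantom correction over ${\cal L}(nN)$ in (\ref{eq:gradient}) is constructed precisely so that, under the stationary measure $\pi(\a)$, the partial contribution of phantoms born before the batch but dying inside it cancels, in expectation, the truncated contribution of phantoms born inside the batch but surviving past $(n+1)N$. Because the process and the phantom-generation mechanism are jointly stationary, the law of the phantoms carried into the batch at stage $nN$ coincides with that of the phantoms leaking out at stage $(n+1)N$, and the residual mismatch is controlled by the residual-lifetime distribution.

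Taking expectations of the displayed difference and using the hypothesis $\Epi\widehat\Gamma_n = C(\a)$, each term factorizes as
\[
\Epi\bigl[\nu(k)\,(\widehat\Gamma_n - C(\a))\bigr]
= \covpi\bigl(\nu(k),\widehat\Gamma_n\bigr) + \Epi[\nu(k)]\,\bigl(\Epi\widehat\Gamma_n - C(\a)\bigr)
= \covpi\bigl(\nu(k),\widehat\Gamma_n\bigr),
\]
the mean-product term vanishing by unbiasedness of $\widehat\Gamma_n$, so that only the covariance survives; inserting this into the decomposition together with the $o(1/N)$ unbiasedness of $\widehat{\widehat{G}}_n^{\,0}$ reproduces (\ref{eq:bias}), with the factor $2$ and the sign read off from the bookkeeping above. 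To confirm the stated order, I would observe that since $\widehat\Gamma_n$ is a $1/N$-average, only the $c(Z_j)$ with $j$ in the lifespan window of phantom $k$ are correlated with $\nu(k)$, so each covariance is $O(1/N)$; with $O(N)$ phantoms dying per batch and the prefactor $2/N$, the leading bias is $O(1/N)$, matching Remark~1 of the User's Guide.

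The main obstacle is step (iii): showing that the boundary truncation contributes only $o(1/N)$ rather than a competing $O(1/N)$ term. This requires quantitative control of the hitting times $\nu(k)$ -- finiteness and uniform boundedness in $\a\in\Param^\mu$ of their first two moments, together with a geometric tail -- which follows from the Doeblin/ergodicity hypothesis that every $A(u)$ has strictly positive entries. With those moment bounds the expected number of living phantoms and their expected residual lifetimes are finite and $O(1)$, so the stationary flux balance is exact up to $o(1/N)$; without them the carried-over phantom term could itself inject an $O(1/N)$ contribution and corrupt the leading expression.
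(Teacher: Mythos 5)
Your proposal is correct and follows essentially the same route as the paper: decompose the estimator into an idealized version $\widehat{\widehat{G}}^{\,0}_n$ with $\widehat\Gamma_n$ replaced by $C(\a)$, show the idealized version is unbiased using stationarity of the batch estimators together with the consistency of Theorem~\ref{thm:frozen} (the paper formalizes the batch-boundary cancellation via a Cesaro/ergodic-average telescoping rather than your direct flux-balance computation, but the mechanism --- that under $\pi(\a)$ the carried-in living phantoms compensate the truncated tails --- is identical), and then reduce the remaining cross term to a covariance using the hypothesis $\Epi\widehat\Gamma_n = C(\a)$. Your closing identity $\Epi[\nu(k)(\widehat\Gamma_n - C(\a))] = \covpi(\nu(k),\widehat\Gamma_n)$ is in fact the clean form of the expression with which the paper's proof concludes, and your explicit moment bounds on $\nu(k)$ over $\Param^\mu$ supply the justification for the $o(1/N)$ remainder that the paper leaves implicit.
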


\begin{proof}
The  proof proceeds in two steps.

Step 1:  
Let $\widehat{\widehat{G}}^o_n(i,p)$ denote the gradient
estimate (\ref{eq:gradient})
when 
$\hat{\Gamma}_N$  is replaced by $C(\a)$.
We first show that the gradient
estimate $\widehat{\widehat{G}}^o_n(i,p)$
is unbiased under the invariant measure $\pi(\param)$, i.e.,
$\Epi[\widehat{\widehat{G}}^o_n(i,p) ] = \nabla_\a C(\a)$.

Notice that under $\pi(\a)$,
consecutive estimators have the same distribution (although they
are not independent), because  the invariant distribution of the number of living
phantoms at the start of the interval is independent of $n$. From the ergodicity of the underlying MDP, $\Epi[\widehat{\widehat{G}}^o_n(i,p)] =
\lim_{n\to\infty} (1/n) \sum_{m=0} \widehat{\widehat{G}}^o_m(i,p)$. Because the estimation by
batches considers breaking up the partial sums of the estimation, then the difference:
\[
{1\over n} \sum_{m=0}^{(n-1)} \widehat{\widehat{G}}^o_m(i,p)
-
\frac{2}{n N} \sum_{k=1}^{nN}
K_{ip}(\a,k) \left[ c(i,a) - c(i,\tilde u_k) + 
\sum_{j= (k+1)}^{k+\nu(k)}  c(Z_j) -
\nu(k) C(\a)
\right]
\]
tends to zero in absolute value, a.s. Using
 Theorem \ref{thm:frozen}, the 
$\frac{2}{n N} \sum_{k}$ term in the above equation
 converges a.s.~to $\nabla_\a C(\a)$ as
$n\to\infty$, for any fixed value of the batch size $N$. This establishes the claim.

Step 2: Using  the above expression consider 
$\widehat{\widehat{G}}^o_n - 
\widehat{\widehat{G}}_n$.
The theorem  follows straightforwardly.
\end{proof}

\subsection{Computational and Memory Complexity}
We present bound on the computational and memory complexity
of the fast frozen phantom algorithm.
Let $\alpha \in\alpha^\mu$. We will bound stochastically the number of living phantoms in terms of Binomial random variables.  

Consider the process $\{Z_n\}$ in stationary operation, and call 
${\cal L}_{nN} (i,a)$ the number of living phantoms in \eq{eq:gradient} that are waiting to hit state $(i,a)$. All these phantoms were created at some 
earlier  time instant
when the chain hit the state $Z_k=(i,p)$ and the phantom decision was $\tilde u_k=a$.  Clearly, the maximum number of phantoms in ${\cal  L}_{nN}(i,a) $ satisfies: 
\[
\|{\cal L}_{nN}(i,a)\| = \sum_{a>p} \sum_{k = t(i,a)}^{nN}\ind{X_k = i; u_k=p, \tilde u_k = a},
\]
where $t(i,a) = \max (j\le nN \colon X_j =1, u_j = a)$, 
because if the state $(i,a)$ is visited at time $j$, at that time all living phantoms that 
were in ${\cal L}_{j-1}(i,a)$ die and ${\cal L}_j(i,a)$ is empty. 
Call $\tau(i,a)$ the return time to state $(i,a)$ and let $n(i)$ 
be the number of visits to state $i$ ($X_k=i$) within two consecutive 
visits to state $(i,a)$. Then the number of phantom systems in 
${\cal L}_{nN}(a)$ is bounded by a Binomial$(\mathbf{p}(a), n(i))$, where 
\[
\mathbf{p}(a) = \frac{\theta_{ia}}{\sum_{m=1}^{p-1} \sin^2(\alpha_{im})},
\]
according to the creation of the phantom systems.  Clearly considering the maximum value of all such probabilities, we can bound the number of phantoms on each list for every value of $\alpha\in\alpha^\mu$.

In \eq{eq:gradient}, the estimator $\widehat{\widehat{G}}^o_n(i,p)$ is composed of  bounded quantities (the state space is finite) plus a contribution of $N$ terms of the order of $ \nu(k)$ each, plus a contribution which is proportional to the random variable:
\[
\sum_{a>p} \sum_{k \in {\cal L}_{nN}(i,a)} \nu(k) \le 
\sum_{a>p}\sum_{i=1}^{n(i)} \nu(k) \le \sum_{a>p}\sum_{k=1}^{\tau(i,a)} \nu(k)
\]
 Note that, as in the proof of  \thm{frozen}, $\nu(k) < \max ( \tau(i,a) : a>p)=\tau$ a.s. 
Because $\alpha^\mu$ is a compact set with ergodic states, the return times are all 
finite a.s.. and $\Ep[\tau(i,a)] =1/ \pi_{ia}$. Therefore, boundedness 
of the $m$-th moment of $\widehat{\widehat{G}}^o_n(i,p)$ now follows from 
boundedness of the $2m$-th moment of $\tau$.

\section{Learning Based Stochastic Gradient
 Algorithms for Constrained MDP}
\label{sec:convergence}
In this section we present  the stochastic gradient algorithms
 that use the parameter free gradient estimators (fast frozen
phantoms) of Sec.\ref{sec:fastfrozen} to optimize
the constrained MDP (\ref{optim}), (\ref{soft}).
Also  weak convergence proofs of these algorithms are presented.
The stochastic algorithms presented are 
stochastic versions of the two deterministic algorithms of
Sec.\ref{sec:lagrange} and  Sec.\ref{sec:multiplier}.
Using the simulation-based  frozen phantom gradient estimator
 \eq{eq:gradient}
 with  local sample averages for the
estimation of the constraint functions may lead to a bias,
thus making the algorithm suboptimal. 
The focus of this section is to
point out the actual bias as well as indications to reduce or
eliminate it.

For notational convenience we  consider equality constraints
here (as  mentioned in Sec.\ref{sec:lagrange} the
inequality constraints can be handled with minor modifications) so that
the constrained MDP problem  (\ref{optim}), (\ref{soft})  reads 
$$
\min_{\param \in \Param^\mu} C(\a),\quad
\mbox{s.t. } \G_l(\a) = 0, \quad l=1,\ldots,L
$$

A control ``agent'' is associated with each of the possible visited
states $i\in S$. The control parameter for this agent is the vector
$(\a_{ip}, p\in\{1,\ldots,d(i)\})$, plus an agent for the (artificial
control) variable representing the Lagrange multiplier $\la$.  The
scheme works by observing the process over a batch size $N$
during which the value of the control parameter does not change.
Over this batch, the constraint is estimated as
$ \hat \G(n)$, see
Sec.\ref{sec:tradeoff},
and  the gradients are estimated
as $\widehat{\nablal C} (n)$, $\widehat {\nablal \G} (n)$
using  \eq{eq:gradient}.
We assume that $\Epi [\hat{\G} (n)] = B(\a)$.
Let
\[
h_0(\a) = \Epi [\widehat {\nablal C }(n)], \quad h_l(\a) = \Epi[\widehat
{\nablal \G_l }(n)], \quad l=1,\ldots, L
\]
be the invariant averages of the batch estimation (refer to 
Theorem \ref{cor:bias}).

\subsection{Stochastic First-Order Primal Dual Algorithm} \label{sec:stochpd}

Consider the stochastic approximation  where the control parameter
$(\a,\la)$ is updated as (c.f.\ (\ref{eq:pd1}), (\ref{eq:pd2}))
\begin{align}
\label{eq:SA1_alpha}
  \a^\ep(n+1) &= \a^\ep(n) -\ep \, \left(\widehat{\nablal C} (n) 
+ \sum_{l=1}^L \left(\la^\ep_l (n) + \rho \hat \G_l(n) \right)\, 
\widehat{\nablal
\G}_l (n) + Z^\mu(n)\right), \quad \a^\ep(0)\in \Param^\mu\\
\la^\ep(n+1) &= \la^\ep(n) + \ep\, \hat \G (n)
\label{eq:SA1_2} 
\end{align}
where the gradient estimators
are given by the fast frozen phantoms in (\ref{eq:gradient}) and
$Z^\mu(n)$ is a projection that ensures that
 $\{\a^\ep(n)\}\in \Param^\mu$. The
above truncation
is a
 mathematical artifice to prove convergence. 
In practical implementation, i.e., when $\ep > 0$,
truncation is not important since one can 
choose $\mu<<\ep  $, e.g., close to the numerical
resolution of the computer, see remark below.

\begin{proposition}
\label{prop:SA1}
For $\a^\ep(n),\la^\ep(n)$ given by (\ref{eq:SA1_alpha}), (\ref{eq:SA1_2}), 
define the interpolated process
$\{\a^\ep(t),\la^\ep(t)\}$ as in (\ref{eq:ainterpolate}), (\ref{eq:linterpolate}).  Then as
$\ep\to0$, $\{a^\ep(t),\la^\ep(t)\}$
converges in distribution to the solution of the ODE (c.f.\ (\ref{eq:odepd}))
\begin{align}
\frac{d}{dt} \a(t) &= -\left[
h_0[\a(t)] + \sum_{l=1}^L (\la_l(t) +\rho\, \G_l[\a(t)]) \, h_l[\a(t)] +
\kappa[\a(t)] 
\right]_\mu , \quad \a(0) \in \Param^\mu\label{eq:pdsa}\\
\frac{d}{dt} \la(t) &= \G[\a(t)], \nonumber
\end{align}
where the notation $[\cdot]_\mu$ refers to the truncated ODE onto $\Param^\mu$
and the added drift is defined as
\begin{equation} \label{eq:kappa}
\kappa(\a) =\rho\, \lim_{n\to\infty} \sum_{l=1}^L\covpi[\hat \G_l (n), \widehat
{\nablal \G_l }(n)].
\end{equation}
\end{proposition}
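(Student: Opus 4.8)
The plan is to read \eq{eq:SA1_alpha}--\eq{eq:SA1_2} as a constant step-size stochastic approximation driven by batch-wise, Markov-modulated noise, treating $(\a^\ep(n),\la^\ep(n))$ jointly since both are updated at the common step size $\ep$, and to apply the ODE (weak convergence) method of Kushner and Yin \cite{KY97,KC78}. Three ingredients are needed. First, the iterates stay in compact sets over any finite horizon $[0,T]$: the projection $Z^\mu(n)$ keeps $\{\a^\ep(n)\}$ in the compact set $\Param^\mu$, and since the constraint functions $\con_l$ are bounded the increments of $\la^\ep(\cdot)$ are $O(\ep)$ and uniformly bounded, so $\la^\ep(\cdot)$ remains in a bounded set on $[0,T]$. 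Second, I would invoke the moment bounds established above, where the $m$-th moment of the WD phantom estimators $\widehat{\nablal C}(n)$ and $\widehat{\nablal \G}_l(n)$ is controlled by moments of the phantom return time $\tau$; this bounds the increments and, by a standard criterion, gives tightness of the interpolated processes $\{\a^\ep(t),\la^\ep(t)\}$.

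The heart of the proof is the averaging step that identifies the limiting drift. Condition on $(\a^\ep(n),\la^\ep(n))=(\a,\la)$ and use that, as $\ep\to 0$ with the batch size $N$ held fixed, the parameter is essentially frozen over the $O(1/\ep)$ batches falling in any macroscopic time increment, so the empirical averages of the per-batch update terms converge to their expectations under the invariant measure $\pi(\a)$. The term $\widehat{\nablal C}(n)$ averages to $h_0(\a)$ and $\la_l\,\widehat{\nablal \G}_l(n)$ to $\la_l\,h_l(\a)$. The essential contribution is the product $\rho\,\hat\G_l(n)\,\widehat{\nablal \G}_l(n)$: because $\hat\G_l(n)$ and $\widehat{\nablal \G}_l(n)$ are computed from the same observed trajectory over batch $n$, its stationary expectation does not factorize, and instead
\[
\Epi[\hat\G_l(n)\,\widehat{\nablal \G}_l(n)]
= \Epi[\hat\G_l(n)]\,\Epi[\widehat{\nablal \G}_l(n)]
+ \covpi[\hat\G_l(n),\widehat{\nablal \G}_l(n)]
= \G_l(\a)\,h_l(\a) + \covpi[\hat\G_l(n),\widehat{\nablal \G}_l(n)],
\]
using $\Epi[\hat\G_l(n)]=\G_l(\a)$. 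Summing over $l$ reproduces the drift $h_0(\a)+\sum_l(\la_l+\rho\,\G_l(\a))\,h_l(\a)$ together with the extra drift $\kappa(\a)$ of \eq{eq:kappa}, while the $\la$-recursion averages directly to $\Epi[\hat\G(n)]=\G(\a)$, matching \eq{eq:pdsa}.

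The main obstacle is precisely this non-vanishing correlation. The naive stochastic-approximation heuristic decomposes each update into a conditional mean plus a martingale difference that averages to zero, which would drop the covariance and yield the ``unbiased'' drift without $\kappa$; the point of the proposition is that, since $\hat\G_l$ and $\widehat{\nablal \G}_l$ share the batch data, $\covpi[\hat\G_l(n),\widehat{\nablal \G}_l(n)]$ survives the $\ep\to 0$ limit and contributes the deterministic bias drift. To make $\kappa$ well defined I would reuse the stationarity argument from the proof of Theorem~\ref{cor:bias}: under $\pi(\a)$ the distribution of the living phantoms at the start of each batch is independent of $n$, so consecutive batch estimators are identically distributed and the covariance, together with its $n\to\infty$ limit, is well defined and continuous in $\a$ on $\Param^\mu$.

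Finally, with the mean drift identified and Lipschitz in $\a$ on the compact $\Param^\mu$ (continuity of $\pi(\a)$, hence of $h_0$, $h_l$, $\G_l$, and $\kappa$, on $\Param^o$), the limiting ODE \eq{eq:pdsa} has a unique solution, and the truncation $[\cdot]_\mu$ is handled as the reflected (projected/Skorokhod) ODE on the boundary of $\Param^\mu$. The Kushner--Yin weak-convergence theorem then yields that $\{\a^\ep(t),\la^\ep(t)\}$ converges in distribution to the solution of \eq{eq:pdsa}; since the limit is deterministic, this is in fact convergence in probability, uniformly on $[0,T]$.
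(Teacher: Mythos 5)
Your proposal is correct and follows essentially the same route as the paper's proof: uniform integrability of the batch updates via moment bounds on the living phantoms and their hitting times (controlled by return times of the ergodic finite-state chain on $\Param^\mu$), followed by the Kushner--Clark/Kushner--Yin ODE method, with the limiting drift identified through the decomposition $\Epi[\hat\G_l(n)\,\widehat{\nablal\G}_l(n)]=\Epi[\hat\G_l(n)]\,\Epi[\widehat{\nablal\G}_l(n)]+\covpi[\hat\G_l(n),\widehat{\nablal\G}_l(n)]$, which is exactly how the paper produces the extra drift $\kappa$. Your write-up simply spells out the averaging and tightness steps that the paper compresses into a citation of Theorem 5.2.1 of the reference.
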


\begin{remark}
 A weak convergence proof of the stochastic gradient algorithm
(\ref{eq:SA1_alpha}), (\ref{eq:SA1_2})
requires uniform integrability of the gradient estimates.
Without the above truncation, if $\a_{ip}=\pi/2$ or $\a_{ip}=0$,
(equivalently one or more action probabilities $\th_{iu} = 0$), then  the
hitting time $\nu(k)$ in (\ref{eq:nudef}) of the phantom system $k$
with initial state $(i,u)$ is not uniformly bounded and  the
gradient estimator (\ref{eq:gradient}) is  not well defined. 
So in the weak convergence proof below we place
 a $\mu$ size ball around the boundary of $\Param$ (recall
$\Param^\mu$ above
is defined as 
 $\Param$ minus this ball)
and the estimates $\a^\ep(n)$ are truncated to $\Param^\mu$.
However, this truncation is very different to standard truncations
in the stochastic approximation literature -- both
from the algorithm and ODE point of view -- as we now argue.
\\
1.
 Recall from
Sec.\ref{sec:prr}
 that the boundary of the set $\Param$ is a fictitious boundary and 
the action probabilities $\th(\a)$ are
 symmetric about this 
fictitious boundary (since they
are functions of $\sin^2(\a)$ and $\cos^2(\a)$).
For example, suppose that $\a^\ep_{ip}(n) < \pi/2 - \mu$
and that 
(\ref{eq:SA1_alpha})
generates the estimate
$\a^\ep_{ip}(n+1) = \pi/2 + \mu + K$ for some small constant $K>0$.
Then truncation is not required since by symmetry
$\a^\ep_{ip}(n+1) = \pi/2 - (\mu+K) \in \Param^\mu$.
Thus in practical implementation, i.e., when $\ep > 0$,
truncation is not important  since we can 
choose $\mu<<\ep  $. 
The probability that an update lies precisely in a ball
of radius $\mu$ is negligibly small -- since if the estimate overshoots
or undershoots this ball, it is automatically in $\Param^\mu$.
\\
2. Suppose that the untruncated version
of the ODE 
(\ref{eq:pdsa})
has a stable point on the boundary
of $\Param$, e.g., a pure policy. Then clearly, the truncated ODE
will have a stable point within $O(\mu)$ of the stable point of the
untruncated ODE.
Hence the  truncation
is very different to  
standard ODE truncations such as  (\ref{eq:odepd}). 
\end{remark}

\begin{proof}  
We first show that the term in parenthesis on the RHS of 
(\ref{eq:SA1_alpha})
is uniformly integrable. 
Notice that all the terms in the gradient estimate (\ref{eq:gradient})
apart from ${\cal L}(\cdot)$ and $\nu(\cdot)$
are uniformly bounded 
for $\a \in \Param^\mu$ since the MDP is finite state and the batch
size $N$ is fixed and finite. Hence a sufficient condition
for uniform integrability is to show that 
 $\sum_{k \in{\cal L}(nN)} \nu(k)$ in (\ref{eq:gradient}) has finite
variance.

Fix $(i,p)$ for the estimator in (\ref{eq:gradient}). We focus on the
phantoms that have an initial decision $\tilde{u}_k = a$ for a fixed $a$
-- and we call these $a$-phantoms.
By definition,
all $a$-phantoms die simultaneously at time $n_1$
when the process $Z_{n_1} = (i,a)$.
Let $n_0$ denote the previous time instant at which 
$\{Z_n\}$ visited $(i,a)$, i.e., $Z_{n_0} = (i,a)$.
The longest hitting time $\nu(k)$ of all these phantoms is bounded by
the time between successive returns 
$n_1-n_0$.
Consider now the number of living $a$-phantoms in ${\cal L}(n)$ at any time 
$n\in [n_0,n_1]$.
Each of these $a$-phantoms must have been created when the process hits
the state $(i,p)$ and the phantom decision chosen is $a$ -- which happens
with probability $\frac{\th_{ia}}{\sum_{m\ge a} \th_{im}}$.
Therefore an almost sure upper bound for the 
cardinality of ${\cal L}(n)$ is $n_1-n_0$.
Notice that $n_1-n_0$ is the return time of a unichain ergodic MDP on
a finite state and therefore has all moments bounded.

Having established uniform integrability of the updates,
the result follows by direct application of Theorem 5.2.1
in \cite{KC78}.
The continuity of the invariant expectations
follows from the fact that the transition kernel of $Z_n$ is analytic
in $\a$.  To characterize the
drift functions, use:
\[
\Epi   \left[ \hat \G_l (n) \widehat{\nablal \G}_l (n) \right]
=  
\Epi [\hat \G_l (n)]\, \Epi [\widehat{\nablal \G}_l (n) ] + \covpi[\hat \G_l (n),
\widehat {\nablal \G_l }(n)],
\]
which establishes the result. 
\end{proof}

\subsection{Stochastic Augmented Lagrangian Multiplier Algorithm} \label{sec:multsa}

Consider the following stochastic approximation version of the multiplier 
algorithm  \eq{eq:mult2}:
\begin{equation}
\a^\ep(n+1) = \a^\ep(n) - \ep\, 
  \left( \widehat {\nablal C}(n) + \sum_{l=1}^L \bigl(\la_l^\ep(n)+\rho
\hat{\G}(n)\bigr)
\, 
\widehat{ \nablal \G_l} (n) + Z^{\mu}(n)\right), \quad
\a^\ep(0) \in \Param^\mu
\label{UpdateAlpha}
\end{equation}
where 
$Z^\mu(n)$ is a projection that ensures that
 $\{\a^\ep(n)\}\in \Param^\mu$
and 
$\{\la^\ep(n), \ep>0, n\in\Integer\}$ is any tight sequence.
A trivial example is when $\la^\ep(n)$ is a bounded constant (a.s.).

The following result regarding the weak
convergence of (\ref{UpdateAlpha}) is proved in the appendix.

\begin{proposition} 
\label{prop:SA2}Assume  that $\{\la^\ep(n), \ep>0, n\in\Integer\}$ is tight. Define
the interpolated process
$\a^\ep(t)$ of (\ref{UpdateAlpha})
 as in \eq{eq:ainterpolate}.  Then as $\ep\to0$, the interpolated process
$\a^\ep(t)$ converges in distribution to the solution of the ODE:
\begin{equation} \label{eq:alfamult}
\frac{d \a(t)}{dt} = -\left[
h_0[\a(t)] + \sum_{l=1}^L (\bar\la_l +\rho\, \G_l[\a(t)]) \, h_l[\a(t)] +
\kappa[\a(t)]
\right]_\mu, \quad \a(0) \in \Param^\mu,
\end{equation}
where $\bar\la_l$ is an accumulation point of 
the sequence $\{\bar\la(\ep), \ep>0\}$ of (convergent) Cesaro sums:
\[
\bar \la (\ep) \equiv \lim_{N\to\infty} {1\over N} \sum_{n=1}^N \la^\ep(n),
\]
and $\kappa(\a)$ is defined in
(\ref{eq:kappa}).
\end{proposition}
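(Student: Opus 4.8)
The plan is to follow the weak convergence argument already given for Proposition~\ref{prop:SA1}, modifying it to account for the fact that here the multiplier sequence $\{\la^\ep(n)\}$ is merely assumed tight rather than generated by its own recursion. The essential pieces are unchanged: I would first establish uniform integrability of the bracketed driving term in (\ref{UpdateAlpha}) so that the averaging theorem \cite[Theorem~5.2.1]{KC78} applies, and then identify the averaged drift under the invariant measure $\pi(\param)$.

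For uniform integrability, the gradient estimators $\widehat{\nablal C}(n)$ and $\widehat{\nablal\G}_l(n)$ are exactly the fast WD phantoms used in Proposition~\ref{prop:SA1}, so the bound on $\sum_{k\in{\cal L}(nN)}\nu(k)$ obtained there --- via the return time of the ergodic chain on the finite state space, which has all moments finite on $\Param^\mu$ --- transfers verbatim. The only new factor is $\la_l^\ep(n)$ multiplying $\widehat{\nablal\G}_l(n)$; since $\{\la^\ep(n)\}$ is tight and the phantom estimators have bounded moments for $\a\in\Param^\mu$, the product stays uniformly integrable. Identifying the drift then proceeds as before: the cost term averages to $h_0[\a(t)]$, and the penalty term $\rho\sum_l\hat\G_l(n)\widehat{\nablal\G}_l(n)$ decomposes under $\pi(\param)$ through the covariance identity $\Epi[\hat\G_l(n)\widehat{\nablal\G}_l(n)]=\Epi[\hat\G_l(n)]\,\Epi[\widehat{\nablal\G}_l(n)]+\covpi[\hat\G_l(n),\widehat{\nablal\G}_l(n)]$ into $\rho\sum_l\G_l[\a(t)]\,h_l[\a(t)]+\kappa[\a(t)]$, with $\kappa$ as in (\ref{eq:kappa}).

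The genuinely new ingredient is the multiplier term $\sum_l\la_l^\ep(n)\widehat{\nablal\G}_l(n)$. Because $\la_l^\ep(n)$ enters linearly and averaging replaces fast fluctuations by time-averages, its effective contribution over the interpolation timescale is the Cesaro mean $\bar\la(\ep)$; passing to the limit along a subsequence of $\ep\to0$ on which both $\bar\la(\ep)$ and the interpolated process converge, and using tightness to extract the accumulation point $\bar\la$, this term averages to $\sum_l\bar\la_l\,h_l[\a(t)]$. Collecting the three contributions yields the drift of (\ref{eq:alfamult}). I expect the main obstacle to be making this Cesaro replacement rigorous: unlike Proposition~\ref{prop:SA1}, where $\la^\ep$ co-evolves on the slow timescale and converges to its own ODE, here the multiplier is an exogenous tight process with no assumed limit, so one must argue that its fast-timescale fluctuations decouple from those of $\widehat{\nablal\G}_l(n)$ and that only the mean $\bar\la_l$ survives in the averaged drift --- which requires carefully applying the averaging theorem with a slowly varying but non-convergent parameter and choosing the joint subsequence appropriately.
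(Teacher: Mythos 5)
Your proposal follows essentially the same route as the paper's proof: uniform integrability inherited from Proposition~\ref{prop:SA1} together with tightness of $\{\la^\ep(n)\}$, the covariance decomposition producing $\kappa$, and an averaging argument over small subintervals in which the Cesaro mean $\bar\la(\ep)$ of the exogenous multiplier survives in the drift. The decoupling step you flag as the main obstacle is handled in the paper exactly as you anticipate: conditionally on the information at the start of each $\delta_\ep$-subinterval, the fixed-$\param$ expectation of $\widehat{\nablal\G}_l(j)$ is independent of $\la_l^\ep(j)$ (since $\la$ does not enter the chain's dynamics), so the product averages to $\bar\la_l(\ep)\,h_l[\a]$, and one passes to the limit along a joint weakly convergent subsequence via Skorohod representation.
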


\begin{remark}
If the bias in $h_{l}(\a)$ and $\kappa(\a)$ is negligible, then under no truncation,
 the ODE (\ref{eq:alfamult}) reduces to (\ref{eq:fixedmult}) -- which
is the ODE
for the deterministic fixed multiplier algorithm.
Result \ref{res:mult} of Sec.\ref{sec:multiplier} 
implies that the estimates converge weakly to a near optimal point, provided that the pair
$(\bar\la, \rho)$ is well chosen. The bias in $h_{l}(\a)$ and $\kappa(\a)$ is of order $O(1/N)$.
\end{remark}

Consider the following update of the multiplier $\la$ in (\ref{UpdateAlpha}).
Define $\J = \lfloor 1/\ep \rfloor$ and consider the recursion
\begin{equation} \label{eq:slowlam}
\la(n+1) = \la(n) + \bar{\G}(n/\J) \ind{\frac{n}{\J} \in
\Integer}
\end{equation}
together with (\ref{UpdateAlpha}). Thus the multiplier is updated
once every $\J$ time points.
Here 
$\bar{\G}(n/\J) = \frac{1}{\J} \sum_{j=(n-1)\J+1}^{n\J} \hat{\G}(j) $.
If the bias in $h_{l}(\a)$ and $\kappa(\a)$ is negligible, then as $\ep \rightarrow 0$,  the
algorithm (\ref{UpdateAlpha})--(\ref{eq:slowlam}) converges weakly to the
deterministic system (\ref{eq:diffeqn}), (\ref{eq:multnew}) which
is the exact multiplier algorithm. As mentioned in Sec.\ref{sec:multiplier},
this in turn converges to
a local KT point. In a practical implementation, one would
choose $\J$ as a large positive integer. In our numerical examples,
see \cite{GVK03}, even
a choice of $\J=10$ resulted in convergence to a KT point.

\subsection{Tradeoff between Bias and Tracking Ability}
\label{sec:tradeoff}

The three sources of bias in the stochastic gradient algorithm
(\ref{eq:SA1_alpha}), (\ref{eq:SA1_2}) are the bias in the estimates
$\widehat{\nablal \G}$, $\widehat{\nablal C}$ and $\hat{\G}$. 
A quick mathematical artifice  for
eliminating the bias is to use batch sizes $N(\ep)\to\infty$ as $\ep
\rightarrow 0$.  Then the ODE (\ref{eq:pdsa}) becomes identical to
(\ref{eq:odepd}).  In the numerical examples of \cite{GVK03} we chose
$N=1000$ -- for finite $N$ the bias is $O(1/N)$.  Although
choosing $N(\ep)\to\infty$ is theoretically appealing, it is of no
practical use since the stochastic gradient algorithm will not respond quickly to changes in the
optimal policy caused by time variations in the parameters of the MDP.
In  \cite{VKM03} we use batch sizes $N=5,10$
 to update the parameter $\a(n)$ frequently,
 and indeed the stochastic approximation  algorithm can be implemented even
for $N=1$. The bias in the gradient estimator
of Theorem~\ref{cor:bias} can be controlled using
averaging of the estimation of the cost function, or other smoothing
statistical techniques. However, what we really are interested in is
the resulting bias of the stable point of the limiting ODE.
The results of extensive numerical studies
indicate that the bias in $h_{l}(\a)$, $l=0,1,\ldots,L$,
 has  negligible effect  on the behaviour 
of the stochastic gradient algorithm  even for
small batch sizes of $N=5$. For example, we 
performed comparisons using the local
sample average
$
\hat \Gamma_{n} = \sum_{k\in I_{n}} c(Z_{k})/N$ 
and then using the actual theoretical value $\hat \Gamma_{n}=C(\a)$ in $\widehat{\widehat G}_{n}$
of (\ref{eq:gradient}) as well as for
the constraints  with no remarkable difference in the 
stochastic gradient algorithm.

The main source of bias for small batch sizes is that introduced by
$\kappa(\a)$, in Propositions \ref{prop:SA1} and \ref{prop:SA2}. 
Using the local sample average over the $n$-th batch
\begin{equation}
\label{eq:hhat}
 \hat{\G}_l^{\ep}(n) = 
\widehat{\widehat{\G}}_l(n) \defn
\frac{1}{N} \sum_{m\in I_n} \con_l(Z_m) 
\end{equation}
yields a noticeable asymptotic bias $\kappa(\a)$. A better alternative
is to use the Cesaro sum
$\hat{\G}_l^{\ep}(n) = \frac{1}{n} \sum_{k=1}^n 
\widehat{\widehat{\G}}_l(k)$.
Since $\hat{\G}_l^{\ep}(n) \rightarrow \G_l(\a)$ a.s.\ for all $\a \in \Param^o$
as $n \rightarrow \infty$, this Cesaro sum estimator would correct
the asymptotic bias of the stochastic gradient algorithm. However, 
running averages do not respond 
to changes in the underlying parameters (e.g. transition
probabilities) of the MDP since they are decreasing step size
algorithms.  Hence they cannot be used for tracking time varying
optimal policies.  To handle this tracking case, we use in \cite{VKM03} an
exponential smoothing
\begin{align}
  \a^\ep(n+1) &= \a^\ep(n) + \ep \, \biggl(\widehat{\nablal C}(n) - 
 \sum_{l=1}^L \left(\la_l^\ep(n) + \rho \hat{\G}_l^{\ep}(n) 
\right)\, \widehat{\nablal\G}_l(n) \biggr)\label{eq:spd1b}\\
\la_l^\ep(n+1) &=  \la^\ep(n) + \ep\,\hat{\G}_l^{\ep}(n) , \quad
\hat{\G}_l^{\ep}(n+1) =
\hat{\G}_l^{\ep}(n) + \delta 
\left(\widehat{\widehat{\G}}_l(n) - \hat{\G}_l^{\ep}(n)\right), \quad l=1,\ldots, L,
\label{eq:spd3}
\end{align}
where $\delta>0$ and $\widehat{\widehat{\G}}_l(n)$ is the local
sample average in (\ref{eq:hhat}).
Using a
two time scale stochastic approximation argument it can be shown
that if $\epsilon/\delta \rightarrow 0$, e.g., if $\delta =
\sqrt{\epsilon}$, and $\epsilon \rightarrow 0$, then the asymptotic limit points
of the corresponding ODE are
unbiased.  In practical implementation, for non zero $\delta$, the
estimates are biased.
While the asymptotic bias can be controlled, the exponential smoothing
delays the reaction time of the stochastic gradient algorithm since a faster
time scale has been introduced, as illustrated in the following numerical
example.

\noindent {\bf Numerical Example}:
We consider adaptive stochastic control of the following
time-varying constrained MDP: 
For time up to $4000$,
$S = \{0,1\}$, $\actionset_i = \{0,1,2\}$, $i \in S$ (i.e., $d(0) = d(1) = 2$),
\[
A(0) = 
\begin{pmatrix}
0.9 & 0.1 \\
0.2 & 0.8 
\end{pmatrix}, \quad
A(1) =
\begin{pmatrix}
0.3 &  0.7\cr
0.6 & 0.4 \cr
\end{pmatrix}, \quad A(2) = 
\begin{pmatrix}
0.5 &  0.5\cr
0.1 & 0.9 \cr
\end{pmatrix}.
\]
The cost matrix $(c(i,a))$, two  constraints ($L=2$)   matrices
$(\con_1(i,a))$, $(\con_2(i,a))$ 
are
$$ (c(i,a)) = -\begin{bmatrix}
50 &    200 &   10 \\
3 &   500 &    0 \end{bmatrix}, \quad
\con_1= \begin{bmatrix}
20 & 100 & -8 \\
-3 & 4 & -10 \end{bmatrix},\;
\con_2= \begin{bmatrix}
10 & -20 & 22 \\
-19 & 17 & -15 \end{bmatrix}.$$
The optimal control policy  incurs a cost of -111.80 (or equivalently
a reward of 111.80) and is  randomized with
probabilities (\ref{eq:theta*})
$$ \th^* = \begin{bmatrix} 0  & 0.2 & 0.8\\
  0 & 0.28 & 0.72 \end{bmatrix}.$$
 For time between 
4000 and  12000 the transition probabilities
are
  $$ A(0) = \begin{bmatrix}
 0.5 &  0.5\\
 0.5 & 0.5 
 \end{bmatrix}, 
  \qquad A(1)=
 \begin{bmatrix} 0.9 &  0.1 \\
 0.1 & 0.9 \end{bmatrix},
  \qquad A(2) =
 \begin{bmatrix}0.5  & 0.5 \\
 0.45 & 0.55 \end{bmatrix}.$$
This has an optimal cost of -44.52 (i.e. reward of 44.52).

The algorithm was initialized
with randomized policy
$$\th(0) =\begin{bmatrix}
0.1 &  0.1 &  0.8\\ 0.0&   0.2&  0.8 \end{bmatrix}.$$
The batch sizes over which the gradients
are estimated was
chosen as $N=10$. The parameters used  in the primal dual algorithm are
 $\rho = 100$, $\ep = 2\times 10^{-7} $ (see (\ref{eq:SA1_alpha}),
(\ref{eq:SA1_2})). 

As can be seen from  Fig.\ref{fig:pdtrack}, it takes
only around  100 batches 
(1000 time points) for  the algorithm to rapidly approaches the optimal
policy. The algorithm also quickly responds to the change in optimal
policy at batch time 400.
The choice of the discounting factor $\delta$ in (\ref{eq:spd3})
of the primal dual
method clearly shows the trade off between bias and tracking ability
in Fig.\ref{fig:pdtrack}.
For $\delta=1.0$, the algorithm has fast tracking properties but a
large bias. For $\delta=0.5$ and $\delta=0.1$ the
bias gets smaller but the adaptation rate is slower.

Our conference  paper \cite{VKM03} and report \cite{GVK03} give several
other numerical examples with small batch sizes for the projected
gradient and multiplier
algorithm.

 \begin{figure}[h]
 \centering
 \setlength{\epsfxsize}{4in}
 \epsfbox{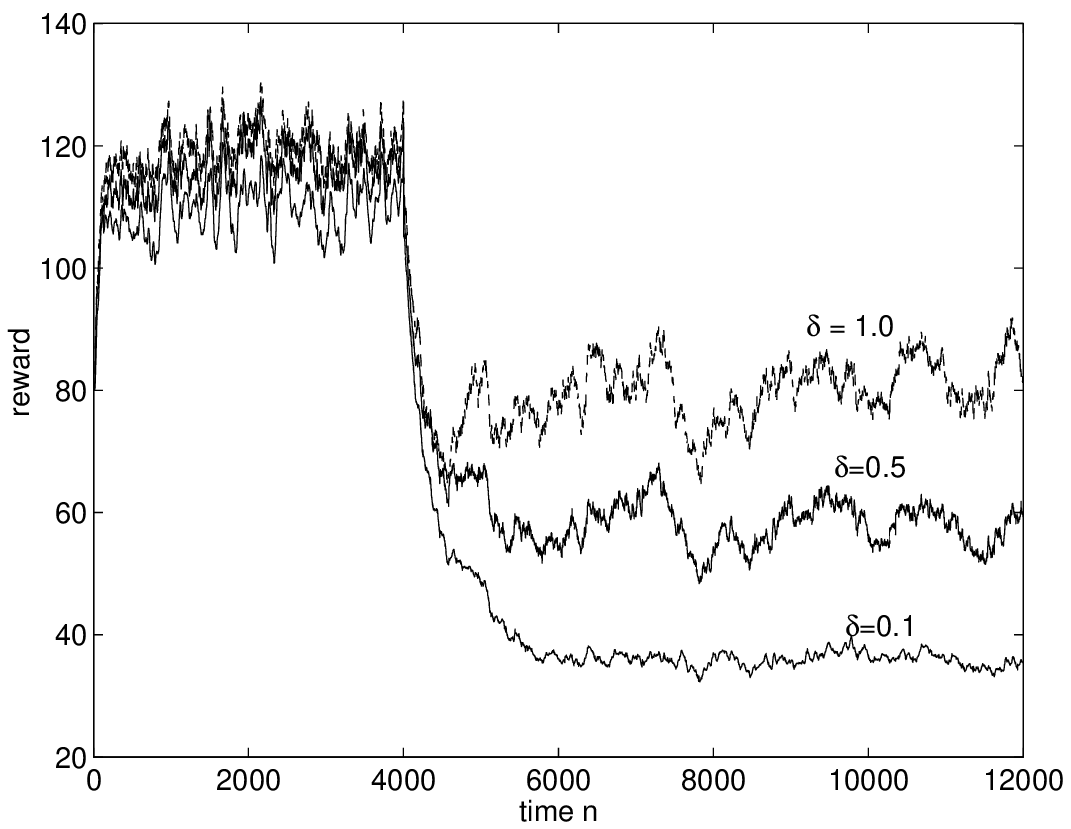}
 \caption{Primal dual algorithm based  stochastic adaptive controller}
 \label{fig:pdtrack} 
 \end{figure}

 \section{Case Study: Monotone Policies for Packet Transmission Scheduling over Correlated Wireless Fading Channels}
\label{sec:tx_sched}
 In this section we consider a 
special case of a constrained MDP
that arises in transmission scheduling in wireless telecommunication
systems. The constrained MDP we consider, models a transmission scheduling problem
in a wireless telecommunication network.
The action set is $\mathcal{U} =\{0,1\}$ corresponding to {\em transmit} and 
{\em do not
transmit}, respectively.
By using a Lagrangian formulation for dynamic programming,
we show that the optimal policy is a randomized mixture between two
deterministic monotone
(threshold) policies; such a policy is a two-step staircase function as plotted in Fig.~\ref{fig:staircase}.  So  the weak derivative
based stochastic approximation algorithms presented above
can  be used to estimate this structured optimal policy. Because of the threshold
structure of the optimal policy, the algorithm implementation is very
efficient. 

Consider the following transmission scheduling problem over a correlated fading wireless channel. At each time slot, a user has to decide whether to transmit a packet unless the packet storage buffer is empty. The objective is to minimize the infinite horizon average transmission cost subject to a constraint on the average delay penalty cost. As in \cite{WM95,ZK99}, we model the correlated fading wireless channel by a finite state Markov chain (FSMC). That is, we assume the channel state evolves according to a FSMC, and the channel state realization is known at every time slot. At time $n = 0,1,\ldots$, the system state is the 3-tuple
$X_{n}=[ X^b_n,
X^c_n, X^y_n]$, where:\\
(i)  $X^b_{n}\in \mathcal{B} = \{0,1,\ldots\}$ is the buffer occupancy state
\\ (ii) $X^c_n$ 
 is the  state of the correlated wireless communication channel.
Assume $X^{c}_n\in \q=\{\q_{1},\q_{2},\ldots,\q_{K}\}$, where $\q$ is the finite channel state space and $\q_{i}$ corresponds to a better channel state than $\q_{j}$ for all $i>j$; $X^c_n$ evolves as a Markov chain with the transition probabilities $A^c(X^c_{n+1}=\q_{j}|X^{c}_n= \q_{i})=a^c_{ij}$.
\\
(iii)  $X^y_n$ is the number of new packets arriving at the buffer.
For simplicity, assume an i.i.d binary packet arrival process, that is at any time $n$ $X^y_n \in \mathcal{Y} =\{0,1\}$, which denotes either
the arrival of no packet or one packet, with the probability mass function
  $P(X^y_n = 1)=\delta$ and $P(X^y_n = 0) =1-\delta$.

  The system state space is then the countable set $\mathcal{S} =  \mathcal{B} 
  \times \q\times \mathcal{Y}$

Let the action sets be $\mathcal{U} = \{0,1\}$ for every buffer occupancy state $X^b>0$ and $\mathcal{U}_0 = \{0\}$ for the buffer occupancy state $X^b=0$, where $0$ and $1$ correspond to the action of not transmitting and transmitting respectively.

We now define the costs and constraints of constrained MDP in Problem:
\begin{itemize}
\item  The transmission cost function  $c(\cdot,\cdot):\q \times \mathcal{U} \rightarrow \mathbb{R}_{+}$ is a function  of the channel state.  Assume that when a transmission is not attempted, no transmission cost  is incurred, that is $c(\cdot,0)=0$. 
 \item The constraint is specified by a delay penalty cost  $\beta(\cdot,\cdot): \q\times \mathcal{U} \rightarrow \mathbb{R}_{+}$, which is applicable only for buffer occupancy state $x^b>0$.  Assume that when a transmission is attempted, there is no delay penalty cost, that is $\beta(\cdot,1) = 0$.  
 
 For channel utilization enhancement, it is assumed that $c(\cdot,u)$ is decreasing and $\beta(\cdot,u)$ is increasing in the channel state, that is the transmission cost is lower and the delay penalty cost is higher for better channel states.   \end{itemize}

If the transmission of a packet over the channel is attempted, that is action $u=1$ is selected, a packet will be successfully received (and hence removed from the buffer) with probability given by the function 
$f:\q \rightarrow [0,1]$.
Here,  $f(\cdot)$ is a user-defined increasing function, that is, a higher (better) channel state has a higher success probability. 

The transition probabilities of the constrained MDP are then given by  
\begin{align*}
 \mathbb{P}(X_{n+1}|X_n,u=0) & = \mathbb{P}(X^c_{n+1}|X^c_n) \mathbb{P}(X_{n+1}^y)  \mathbf{I}(X^b_{n+1} = X^b_n + X^y_n) \\
 \mathbb{P}(X_{n+1}|X_n,u=1) & = \mathbb{P}(X^c_{n+1}|X^c_n) \mathbb{P}(X_{n+1}^y)  f(X^c_n)  \mathbf{I}(X^b_{n+1} = X^b_n + X^y_n - 1) \\ & + \mathbb{P}(X^c_{n+1}|X^c_n) \mathbb{P}(X_{n+1}^y) (1-f(X^c_n))  \mathbf{I}(X^b_{n+1} = X^b_n + X^y_n),
\end{align*}
where $\mathbf{I}(\cdot)$ is the indicator function.
The corresponding constrained MDP is then given by (\ref{J}),~(\ref{costconstraint}) with only one constraint, that is, $L=1$: \begin{align} \label{eq:constraint} \lim \limits_{N \rightarrow \infty} \sup \frac{1}{N} \mathbb{E}_u \biggl[ \sum \limits_{n=1}^N \beta(X^c_n,u_n) \biggr] \leq \gamma. \end{align}

In the remainder of the section we will outline the steps involved in proving the threshold structure of the optimal policy of the above constrained MDP and describe how the threshold structure can be exploited in the proposed weak derivative based stochastic approximation algorithm. 
\begin{figure}
\centering
\includegraphics[scale=0.6]{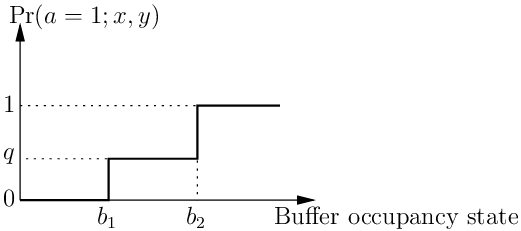}
\caption{The constrained average cost optimal policy  $\mathbf{u}^{*}([x^b,x^c,x^y])$ is a randomized mixture between two policies that are deterministic and monotonically increasing in the buffer occupancy state $x^b$.}
\label{fig:staircase}
\end{figure}

The steps involved in proving the structural result for the considered countable state, infinite horizon average cost constrained MDP includes
\begin{itemize}
\item Derive a condition for which all constrained policies induce a stable buffer and recurrent Markov chains.
\item Use the Lagrange multiplier formulation and prove the existence of an unconstrained optimal policy under the condition for buffer stability. 
\item Prove the threshold structure of the unconstrained (Lagrangian cost) optimal policies by using the supermodularity concept. The structure of the constrained optimal policy then follows due to a well-known result that relates unconstrained and constrained optimal policies \cite{BR85,Alt99}. 
\end{itemize}

The condition for buffer stability and recurrence of the Markov chains is as follows,
see \cite{NK09,NK10} for proof.
\begin{lemma}
\label{lm:stability}
Denote $\min \limits_{x^c \in \q}\{ f(x^c)\} = \underbar{f}$\, ; $\min \limits_{x^c\in \q} \beta(x^c,0) = \underline{\beta}$. If 
$
  \frac{\delta }{\underbar{f}} < 1- \frac{\gamma}{\underline{\beta}}  
$,
then every policy $\mathbf{u}$ satisfying the constraint (\ref{eq:constraint})  induces a stable buffer, and a recurrent Markov chain.
\end{lemma}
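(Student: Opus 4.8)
The plan is to reduce recurrence of the full chain $\{X_n\}=\{(X^b_n,X^c_n,X^y_n)\}$ to stability of the scalar buffer $\{X^b_n\}$, and to certify that stability through a time-averaged drift inequality driven by the constraint. Since $X^c_n\in\q$ and $X^y_n\in\{0,1\}$ range over finite sets, the only coordinate that can escape to infinity is the buffer, so it suffices to show that the finite set $\{x^b=0\}$ is visited with strictly positive long-run frequency. I restrict to stationary (randomized) policies, so that $\{X_n\}$ is a homogeneous Markov chain; this is without loss of generality, since the constrained optimum is stationary. First I would record the one-step buffer drift. Writing $A_n=\ind{u_n=1,\,X^b_n>0}$ for the event \emph{transmit while nonempty}, the transition law gives $X^b_{n+1}=X^b_n+X^y_n-D_n$ with $\mathbb{E}[D_n\mid\Field_n,u_n]=f(X^c_n)\,A_n$, hence $\mathbb{E}[X^b_{n+1}-X^b_n]=\delta-\mathbb{E}[f(X^c_n)\,A_n]$.

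Telescoping this identity over $n=0,\dots,N-1$, bounding the service term from below via $f\ge\underline{f}$, and using the decomposition $A_n=\ind{X^b_n>0}-B_n$, where $B_n=\ind{u_n=0,\,X^b_n>0}$ is the \emph{idle while nonempty} event, yields
\[
\mathbb{E}[X^b_N]\le \mathbb{E}[X^b_0]+(\delta-\underline{f})N+\underline{f}\sum_{n=0}^{N-1}\mathbb{P}(X^b_n=0)+\underline{f}\sum_{n=0}^{N-1}\mathbb{E}[B_n].
\]
The constraint controls the idling term: because $\beta(\cdot,1)=0$, $\beta(\cdot,0)\ge\underline{\beta}$, and the penalty is charged only when $x^b>0$, one has the pointwise bound $\beta(X^c_n,u_n)\ge\underline{\beta}\,B_n$, so that (\ref{eq:constraint}) gives $\limsup_N\frac1N\sum_{n}\mathbb{E}[B_n]\le\gamma/\underline{\beta}$.

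Rearranging the displayed inequality, dividing by $\underline{f}N$, using $\mathbb{E}[X^b_N]\ge0$, and taking $\liminf$, the empty-buffer frequency satisfies
\[
\liminf_{N\to\infty}\frac1N\sum_{n=0}^{N-1}\mathbb{P}(X^b_n=0)\ \ge\ \Big(1-\frac{\gamma}{\underline{\beta}}\Big)-\frac{\delta}{\underline{f}}\ >\ 0,
\]
the strict positivity being exactly the hypothesis $\delta/\underline{f}<1-\gamma/\underline{\beta}$ (which in particular forces $\underline{f}>0$ and $\underline{\beta}>\gamma$). Thus the finite set $\{x^b=0\}$ carries strictly positive occupation density, and I would then invoke the standard Markov-chain dichotomy: for a transient or null-recurrent chain the Ces\`aro occupation density of every finite set vanishes, so a positive density on $\{x^b=0\}$ forces the chain to be positive recurrent, i.e.\ the buffer is stable and the chain is recurrent.

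The main obstacle is the passage from the drift inequality to recurrence, rather than the drift bound itself. Two points need care. First, the empty-buffer term $\sum\mathbb{P}(X^b_n=0)$ initially looks like an obstruction (it enters with a favourable sign and cannot simply be discarded), and the key realization is that it is precisely the stability certificate one wants; the argument is therefore organized to \emph{lower-bound} the empty frequency rather than to attempt a per-state Foster--Lyapunov condition, which fails here because a constraint-satisfying policy need not have negative drift at every individual nonempty state. Second, converting positive occupation density into positive recurrence requires irreducibility on the communicating class containing $\{x^b=0\}$; this in turn uses $\underline{f}>0$ (so the buffer can be drained by successful transmissions) and irreducibility of the channel FSMC, after which no-arrival/success sequences of positive probability connect the relevant states. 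These structural facts, together with the finiteness of $\{x^b=0\}$, complete the argument.
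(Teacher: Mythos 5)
Your proof is correct, and it reaches the conclusion by a somewhat different route than the paper. The paper's argument splits into two cases on whether the long-run empty-buffer frequency $\limsup_N \frac1N\sum_n \prob(X^b_n=0)$ is positive: if it is, stability is immediate; if it is zero, the paper lower-bounds the average successful service rate by $\underline{f}\bigl(1-\gamma/\underline{\beta}\bigr)>\delta$ (using exactly your bound, via the constraint, on the idle-while-nonempty frequency) and then invokes Foster's theorem. You instead telescope the one-step buffer drift $\esp[X^b_{n+1}-X^b_n]=\delta-\esp[f(X^c_n)\mathbf{1}(u_n=1,X^b_n>0)]$ over a horizon, use $\esp[X^b_N]\ge0$, and extract a direct quantitative lower bound $\liminf_N\frac1N\sum_n\prob(X^b_n=0)\ge 1-\delta/\underline{f}-\gamma/\underline{\beta}>0$ on the empty-buffer occupation density, finishing with the Ces\`aro-occupation dichotomy for countable chains. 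The two proofs use the same two ingredients (the pointwise bound $\beta\ge\underline{\beta}\,\mathbf{1}(u_n=0,X^b_n>0)$ turning the constraint into an idle-frequency bound, and the comparison $\delta<\underline{f}(1-\gamma/\underline{\beta})$), but your organization is cleaner: it avoids the case split entirely and in fact shows that the paper's second case is vacuous, since an empty-buffer frequency of zero would contradict your inequality. What the paper's route buys is that Foster's criterion delivers positive recurrence without needing the occupation-density dichotomy; what yours buys is a single unified inequality and an explicit lower bound on the fraction of time the buffer is empty. Both proofs share the same residual obligations, which you correctly flag: restriction to stationary policies so that "recurrent Markov chain" is meaningful, finiteness of the first moments $\esp[X^b_n]\le x^b_0+n$ so the telescoping is legitimate, and irreducibility on the reachable communicating class (guaranteed by $\underline{f}>0$ and the channel chain) so that positive occupation density of the finite set $\{x^b=0\}$ upgrades to positive recurrence.
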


\subsubsection*{Lagrange formulation and existence of an optimal policy}
We convert the constrained MDP to an unconstrained MDP by the Lagrange multiplier method. In particular, for a Lagrange multiplier $\lambda$, the instantaneous Lagrangian cost at time $n$ is 
$ c(X_n,u;\lambda) = c(X_n^c,u) + \lambda \beta(X_n^c,u)$.
The Lagrangian average cost for a policy $\mathbf{u}$ is then given by
\begin{align}
J_{x_0}(\mathbf{u};\lambda) = \lim \sup \limits_{N \rightarrow \infty} \frac{1}{N} \mathbb{E}_{\mathbf{u}} \biggl[ \sum \limits_{n=1}^{N}c(X_n,u_n;\lambda)|X_0 = x_0 \biggr],
\label{eq:Lcost}
\end{align}
and the corresponding unconstrained MDP is to minimize the above Lagrangian average cost.

The existence and threshold structure of an unconstrained stationary average Lagrangian cost optimal policy are established by viewing the average cost MDP model as a limit of discounted cost MDPs with discount factors approaching $1$. In particular, \cite{Ros83,Sen89} provide the theory for relating average cost optimal policies to discounted cost optimal policies. Define the discounted cost as below
\begin{align}
J_{x_0}^{\nu}(\mathbf{u};\lambda) = \lim \limits_{N \rightarrow \infty}\mathbb{E}_{\mathbf{u}} \biggl[ \sum \limits_{n=1}^{N} \nu ^{n-1} c(X_n,u_n;\lambda)|X_0 = x_0 \biggr],
\label{eq:Lcost_discount}
\end{align} 
where $0\leq \nu \leq 1$ is the discount factor. Define the optimal discounted cost by $ V^{\nu}(x_0) = \inf \limits_{u \in \mathcal{D}} J_{x_0}^{\nu}(\mathbf{u};\lambda)$ (for notational convenience we omit the notation of Lagrange multiplier $\lambda$ in $V^{\nu}(x_0)$).

In \cite{Ros83,Sen89}, the authors proved that the  average cost optimal policy exists and inherits the structure of the discounted cost optimal policies under the following conditions:\\
A1. For each state $x$ and discount factor $\nu$, the optimal discounted cost $V^{\nu}(x)$ is finite. 
\\
 A2. Assume a reference state $0$. There exists a nonnegative $N$ such that $-N \leq h_{\nu}(x) \stackrel{\triangle}{=} V^{\nu}(x) - V^{\nu}(0)$ for all $x \in \mathcal {S}$ and $\nu \in (0,1)$. 
 \\
 A3. There exists nonnegative $M_{x}$, such that $h_{\nu}(x) \leq M_{x}$ for every $x \in \mathcal {S}$ and $\nu$. For every $x$ there exists an action $u(x)$ such that $\sum\limits_{x'}
\mathbb{P}(x'|x,u(x))M_{x'} < \infty$. 

Define the reference state by $X^0=[x^b=0,x^c=\q_{K},x^y=0]$. In light of Lemma~\ref{lm:stability} it is clear that the policy of always transmitting whenever the buffer is not empty will induce a stable buffer, and hence finite expected time and cost for first passage to the reference state. As a result, due to Propositions 5(i) and 4(ii) in \cite{Sen89}, A1 and A3 hold. Furthermore, as all instantaneous costs are bounded, the following value iteration converges to the optimal discounted cost $V^{\nu}(\cdot)$ for all discount factor $\nu$
\begin{align}
&V_{n+1}^{\nu}(x) = \min \limits_{u \in \mathcal{U}} Q^{\nu}_{n+1}(x,u) \label{eq:value_iter1} 
\\
&Q^{\nu}_{n+1}(x,u) = c(x,u;\lambda) + \nu \sum \limits_{x' \in \mathcal{S}} \mathbb{P}(x'|x,u) V_n^{\nu}(x',a). 
\label{eq:value_iter2}
\end{align}
Using the recursion (\ref{eq:value_iter1})--(\ref{eq:value_iter2}) it can be shown by induction that $V^{\nu}([x^b,x^c,x^y])$ is increasing in $x^b$ and $x^y$ \cite{NK09,NK10}, which implies that A2 holds. 

\subsubsection*{Threshold structure of discounted/average cost optimal policies}
Due to convergence of (\ref{eq:value_iter1}),~(\ref{eq:value_iter2}) for all initial conditions, in order to show that the discounted cost optimal policy is monotonically increasing in the buffer state $b$ it suffices to show $Q^{\infty}([x^b,x^c,x^y],u)$ is submodular in $(x^b,u)$ for all $x^b\geq 1$ for some initial condition \cite{Top98}. This can be done via mathematical induction as in the theorem below, see \cite{NK09}
for proof. We also refer to \cite{HK10} where the Nash equilibrium of a switching control access control problem has a monotone structure.

 \begin{theorem}
\label{thm:threshold}
\textit{ The discounted (Lagrangian) cost optimal policy is a threshold policy of the form}
\begin{align}
\label{eq:threshold}
u^{*}_{\nu}([x^b,x^c,x^y]) = \left \{ \begin{array}{clcr} 
0 & \textup{if } 0 \leq x^b < b(x^c,x^y) \\
1 & \textup{if } b(x^c,x^y) \leq x^b,
\end{array}
\right.
\end{align}
\textit{where $b(x^c,x^y):\q \times \mathcal{Y} \rightarrow \mathcal{B}$ defines the threshold for the pair of channel state and packet arrival event $(x^c,x^y)$.}
\end{theorem}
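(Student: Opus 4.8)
The plan is to prove the threshold structure through the discounted-cost value iteration \eqref{eq:value_iter1}--\eqref{eq:value_iter2}, which converges for every initial condition by A1, and then to read off the threshold from the monotone comparative statics of Topkis \cite{Top98}: for a \emph{minimization} problem with binary action lattice $\mathcal{U}=\{0,1\}$, submodularity of $Q^\nu_{n+1}([x^b,x^c,x^y],u)$ in $(x^b,u)$ forces the minimizing action to be nondecreasing in $x^b$, which is exactly a threshold of the form \eqref{eq:threshold}. The whole argument is an induction on the iteration index $n$ in which I carry two structural invariants for $V^\nu_n([x^b,x^c,x^y])$, viewed as a function of $x^b$ for each fixed $(x^c,x^y)$: (P1) $V^\nu_n$ is nondecreasing in $x^b$, and (P2) $V^\nu_n$ is discretely convex in $x^b$, i.e.\ the forward differences $\Delta_b V^\nu_n(m,\cdot):=V^\nu_n(m+1,\cdot)-V^\nu_n(m,\cdot)$ are nondecreasing in $m$. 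Both hold trivially for the initialization $V^\nu_0\equiv 0$, and (P1) is the property already used to verify A2 in \cite{NK07a}.

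First I would write out the action-gap from \eqref{eq:value_iter2}, using the model's transition law together with $c(\cdot,0)=0$ and $\beta(\cdot,1)=0$. For $x^b\ge 1$ this gives
\[
Q^\nu_{n+1}(x,1)-Q^\nu_{n+1}(x,0)=c(x^c,1)-\lambda\beta(x^c,0)-\nu f(x^c)\,\mathbb{E}\bigl[V^\nu_n(x^b+X^y,\cdot)-V^\nu_n(x^b+X^y-1,\cdot)\bigr],
\]
where the expectation is over the next channel and arrival. Under (P2) the bracketed difference is nondecreasing in $x^b$, and since $f(x^c)>0$ the entire gap is nonincreasing in $x^b$: this is precisely submodularity of $Q^\nu_{n+1}$ in $(x^b,u)$. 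Topkis's theorem then yields that the minimizer $u^\ast_{n+1}(x^b)$ is nondecreasing, hence of threshold form at every finite $n$; the restricted action set $\{0\}$ at $x^b=0$ forces the threshold to satisfy $b(x^c,x^y)\ge 1$, consistent with never transmitting from an empty buffer.

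To close the induction I must re-establish (P1) and (P2) for $V^\nu_{n+1}=\min_u Q^\nu_{n+1}(\cdot,u)$. Monotonicity (P1) is routine: each $Q^\nu_{n+1}(\cdot,u)$ is a convex combination of shifted copies of the nondecreasing $V^\nu_n$ plus $x^b$-independent costs, and the pointwise minimum of nondecreasing functions is nondecreasing. The genuine difficulty, and the step I expect to be the main obstacle, is convexity (P2). Each branch $Q^\nu_{n+1}(\cdot,u)$ is itself convex (again a mixture of shifted convex functions), but the pointwise minimum of two convex functions need not be convex, and a naive single-crossing argument even predicts a concave kink at the switching point, since the transmit branch has the flatter slope. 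The resolution must exploit that the two branches are not arbitrary: the displayed identity shows $Q^\nu_{n+1}(\cdot,1)=Q^\nu_{n+1}(\cdot,0)+\mathrm{gap}(\cdot)$ with $\mathrm{gap}$ nonincreasing, so $V^\nu_{n+1}=Q^\nu_{n+1}(\cdot,0)+\min(0,\mathrm{gap})$, and the whole convexity question reduces to comparing the negative kink of $\min(0,\mathrm{gap})$ at the threshold against the curvature of the not-transmit branch. Writing out the discrete second difference of $V^\nu_{n+1}$ at the three levels $b(x^c,x^y)-1,\,b(x^c,x^y),\,b(x^c,x^y)+1$ straddling the threshold, one finds that because the gap drops by exactly $\nu f(x^c)\,\nabla_b^2 V^\nu_n$ per step while the not-transmit branch's curvature is $\nu\,\nabla_b^2 V^\nu_n$, the factor $0\le f(x^c)\le 1$ makes the curvature of $Q^\nu_{n+1}(\cdot,0)$ dominate the kink; precisely, the sign condition at the switch reduces to $\mathrm{gap}(b^\ast)\ge -\nu f(x^c)\,\nabla_b^2 V^\nu_n\ge -\nu\,\nabla_b^2 V^\nu_n$, which holds by (P2) and $f\le 1$. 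This coupled second-difference bookkeeping is the technical heart of the argument.

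Finally I would pass to the limit. By A1 the value iteration converges to $V^\nu$ and $Q^\nu\equiv Q^\infty$, and both submodularity of $Q$ in $(x^b,u)$ and convexity of $V$ in $x^b$ are preserved under pointwise limits; hence the stationary discounted-optimal policy $u^\ast_\nu$ that minimizes $Q^\nu$ inherits the threshold form \eqref{eq:threshold}, with the threshold $b(x^c,x^y)$ depending on the pair $(x^c,x^y)$ (and, in general, on $\nu$). This establishes the stated theorem for the discounted cost; the corresponding average-cost structural result then follows from the discounted one through conditions A1--A3 and the limiting theory of \cite{Ros83,Sen89} invoked above, as carried out in detail in \cite{NK09}.
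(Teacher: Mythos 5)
Your proposal is correct and follows the same overall route as the paper (whose detailed proof is delegated to \cite{NK09}): value iteration (\ref{eq:value_iter1})--(\ref{eq:value_iter2}), an induction carrying increasing differences of $V^\nu_n$ in $x^b$, submodularity of $Q^\nu_{n+1}$ in $(x^b,u)$ for $x^b\ge 1$, Topkis' monotone-selection theorem, and a passage to the limit. The one place where you genuinely diverge is the propagation of convexity to $V^\nu_{n+1}$, which is indeed the technical heart. The paper's argument is the generic three-point sandwich: take the optimal actions $a_2\ge a_1\ge a_0$ at $x^b+1,x^b,x^b-1$ (ordered by submodularity), insert the nonnegative optimality slacks $Q_{n+1}(x^b,\cdot,a_2)-Q_{n+1}(x^b,\cdot,a_1)\ge0$ and $Q_{n+1}(x^b,\cdot,a_0)-Q_{n+1}(x^b,\cdot,a_1)\ge0$, and reduce to comparing the two same-action increments, which are convex combinations of forward differences of $V^\nu_n$ straddling a common value. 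Your argument instead exploits the binary action set through the identity $V^\nu_{n+1}=Q^\nu_{n+1}(\cdot,0)+\min(0,\mathrm{gap})$ and checks the two second differences straddling the threshold directly; your bookkeeping is correct (at the triple centered at $b^\ast$ the second difference is $\nu(1-f)\,\esp[\nabla_b^2V^\nu_n]-\mathrm{gap}(b^\ast)\ge0$, and at the triple centered at $b^\ast-1$ it is $\nu\,\esp[\nabla_b^2V^\nu_n]+\mathrm{gap}(b^\ast)\ge\nu(1-f)\,\esp[\nabla_b^2V^\nu_n]+\mathrm{gap}(b^\ast-1)\ge0$), and it is essentially the two-action specialization of the paper's slack argument, since the signs $\mathrm{gap}(b^\ast)\le0\le\mathrm{gap}(b^\ast-1)$ you use \emph{are} those optimality slacks. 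What the paper's version buys is that it extends verbatim to more than two actions; what yours buys is an explicit quantitative picture of why $f\le1$ prevents the concave kink at the switch. One caveat you share with the paper: because the delay penalty is charged only when $x^b>0$, the one-stage cost of action $0$ is not constant in $x^b$ across $x^b=0$, so the boundary triple $(0,1,2)$ needs a separate check (or the convention that $\beta(x^c,0)$ is charged at $x^b=0$ as well); this affects both proofs equally and is resolved in \cite{NK09}.
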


 Therefore (unconstrained) Lagrangian average cost optimal policy, which inherits the threshold structure of some sequence of discounted cost optimal policies, is of the form (\ref{eq:threshold}). Due to Theorem 4.3 in \cite{BR85}, the constrained optimal policy for the  constrained MDP is a randomized mixture of two threshold policies:
\begin{align}
\mathbf{u}^{*} = q(x^c,x^y) \mathbf{u}^{*}_1 + (1-q(x^c,x^y)) \mathbf{u}^{*}_2
 = \begin{cases} 0  & \text{ if }   0 \leq x^b < b_1(x^c,x^y)  \\
 q(x^c,x^y)   & \text{ if } b_1(x^c,x^y) < x^b < b_2(x^c,x^y) \\
 1  & \text{ if } x^b > b_2(x^c,x^y). \end{cases}
\label{eq:mixture}
\end{align}
Here for each channel state $x^c\in \q$ and packet arrival state $x^y \in \mathcal{Y}$,   
$q(x^c,x^y) \in [0,1]$ denotes the mixture probability and $\mathbf{u}^{*}_1,\mathbf{u}^{*}_2$ are monotone policies in the buffer state
$x^b$ of the form (\ref{eq:threshold}) with
threshold states $b_1(x^c,x^y)$ and $b_2(x^c,x^y)$, respectively.
 Therefore, the  optimal policy $\mathbf{u}$  has a  simple 
 threshold structure.

 \section{Perspective. Bias and Variance of Gradient Estimators} \label{sec:biasvarwd}
In this final section, we briefly compare the statistical properties of the score function and weak derivative gradient estimators 
discussed above; see also \cite{Kri16} for a detailed discussion and proofs.

\begin{theorem} \label{thm:scoremarkov}
For a Markov chain with initial distribution $\belief_0$,  regular transition matrix $\tpm$ with coefficient of ergodicity $\dob$  and stationary distribution $\belief_\theta$:
\begin{compactenum}
\item The score function gradient estimator  has: 
bias $  O(1/\bsize)$ and 
variance $ O(\bsize) $ 
\item The weak derivative gradient estimator has
bias $ O(\dob^m) \, \dvar{\belief_0}{\belief_\theta} + O(\dob^\bsize) $ and 
variance $O(1) $ 
\end{compactenum}
\end{theorem}
The result shows that  despite the apparent simplicity of the score function gradient estimator (and its widespread use), the weak derivative estimator performs better in both bias and variance.
The variance of the score function estimator  actually
grows with sample size! This is apparent from the numerical examples presented in Sec.\ref{sec:numerical}
where the derivative estimator has a substantially smaller variance
than the score function gradient estimator.

Why is  the variance of the score function gradient estimator $O(N)$ while the variance of the weak derivative estimator is $O(1)$?
The weak derivative estimator uses the difference of two sample paths. Its variance
is dominated by a term of the form $\sum_{m=1}^N g^\p (\tpm^m)^\p (\belief_0 - \bar{\belief}_0)$ where $\belief_0$ and $\bar{\belief}_0$ are the
initial distributions of the two trajectories. This sum is bounded by $\text{constant} \times  \sum_{m=1}^N \rho^m $ which is $O(1)$ since  $\rho < 1$ for a regular
transition matrix $\tpm$.
In comparison, the score function estimator uses a single sample path and its variance is dominated by a term of the form  $\sum_{m=1}^N g^\p (\tpm^m)^\p \belief_0 $.
This sum grows  as $O(N)$. The proof in \cite{Pfl96,Kri16} formalizes  this argument.

To gain additional insight  consider the following simplistic examples.

1. I.I.D.\  process.  Suppose $\tpm = 1 \belief_\theta^\p$.   If we naively use the score function estimator for a Markov chain, the variance is $O(N)$.
In comparison, the weak derivative estimator is identical to that of a random variable since $\tpm$ has identical rows. So the variance of the weak derivative estimator is substantially smaller.

2. Constant cost $c(x) = 1$.  In this trivial case, $\nabla_\theta \esp_{\belief_\theta}\{c(x)\} = 0$.   If we naively use the score function estimator for a Markov chain,
the variance is $O(N)$. In comparison the weak derivative estimator yields 0 implying that the variance is zero. So for nearly constant costs,
the weak derivative estimator is substantially better than the score function derivative estimator.

\section{Conclusions and Extensions}

In this paper simulation based gradient algorithms have been presented
for adaptively  optimizing a constrained average cost finite state
MDP. First a 
parameterization of the randomized control policy using
spherical coordinates was presented.
Then  a novel measure-valued gradient estimator using
frozen phantoms was presented. 
The frozen phantoms were  based on
``cut and paste'' techniques which look at the past observation
history.  By filtering these frozen
phantoms, we derived  a parameter free consistent algorithm for
estimating gradients of the cost and constraints
without explicit knowledge of the
transition probabilities
of the MDP and without requiring off-line simulations.
A fast version of the frozen phantom estimator suitable for
adaptive control of 
MDPs with slowly time varying parameters was also given.
As illustrated in
Sec.\ref{sec:numerical},
the resulting  gradient
estimator has much smaller variance than score function
based gradient estimators. The measure-valued  gradient estimator was then
used in a stochastic gradient algorithm with  fixed step
size in order to track time varying MDP with
unknown transition probability matrices.
Primal dual and multiplier based stochastic gradient algorithms were
presented for handling the constraints.
These algorithms are nearly optimal in that they converge
weakly to a local minimum with a bias --  this bias
is identifiable and can be made negligible.

In  \cite{VKM03}, a detailed 
numerical study of the frozen phantoms (parameter free gradient
estimators) is conducted. The effect of moving averages and exponentially
discounted averages on the bias is also studied. See also \cite{GVK03,VKB02}
for further numerical examples.
In current work, we are examining applications of the
techniques in this paper to admission control of wireless networks.
As mentioned in Sec.\ref{sec:intro}, in this case the quality of service and
blocking probability 
constraints naturally translate into constraints on the MDP.

Given that the proposed adaptive controller is a fixed step size
stochastic approximation algorithm, several variations such as iterate
averaging \cite[Chapter 11]{KY03}, adaptive step size updating
\cite[Sec.3.2]{KY97} and
decentralized asynchronous implementation \cite[Chapter 12]{KY03}
are possible. It is also
worthwhile examining the use of similar methods for partially observed
MDPs (POMDPs). For example \cite{KD07,Kri11,Kri13,KAB18} use offline policy gradient algorithms that use the SPSA algorithm; there is strong motivation to develop
online policy gradient algorithms that use weak derivatives. Also \cite{HH04} proposes an interesting example of a combined score function weak derivative method.

In the numerical examples, we considered the case where the online policy gradient algorithm  tracks the optimal policy which jump changes. As show in \cite{YKI04,KTY09,NKY17} if the optimal policy itself jump changes according to a Markov chain with transition probability  matrix  $I + \epsilon Q$ where $Q$ is a transition rate matrix, where $\epsilon$ is the same order of magnitude as the step size of the policy gradient algorithm,   then the weak convergence analysis has an interesting form: the averaged system converges to a Markov modulated ordinary differential  equation (ODE) rather than a deterministic ODE.

\bibliographystyle{abbrv}

\bibliography{C:/Users/vikramk/styles/bib/vkm}

\section{Appendix: Proof of Proposition \protect\ref{prop:SA2}}

\begin{proof} 
First, from tightness of $\{\la^\ep(n)\}$, it follows that the family $\{\bar\la(\ep)\}$ of
(deterministic) averages lies in a compact set, thus the accumulation points $\bar\la$
exist.

 From the proof of uniform integrability in Proposition \ref{prop:SA1},
and 
 tightness of $\{\la^\ep(n)\}$, it follows that
the sequence
$\{(\a^\ep(n+1)-\a^\ep(n)/\ep)\}$ is uniformly integrable, which implies that
$\{\a^\ep(n)\}$ is tight. Therefore for any sequence $(\a^{\ep_k}(\cdot),
\la(\ep_k))$ there is at least one (weakly) convergent subsequence with a.s.~Lipschitz
continuous limit (refer to \cite{KY03}). For the rest of the proof, until specified,
assume that $\ep$ labels a weakly convergent subsequence (to avoid the $\ep_k$
cumbersome indexing). We will now identify the limits of such convergent subsequences
and show that they all satisfy the same ODE. Also to ease the notation in the proof, call
$Y _0(n) = \widehat {\nablal C }(n), Y _l(n) = \widehat
{\nablal \G_l }(n)$.

From the definition \eq{eq:ainterpolate} it follows that:
\[
\a^\ep(t+s) -\a^\ep(t) = - \ep \sum_{n=\lfloor t/\ep\rfloor}^{\lfloor
(t+s)/\ep\rfloor-1} 
 \left[ Y _0(n) + \sum_{l=1}^L \la_l^\ep(n+1) \, Y _l(n) \right].
\]

Divide now the interval $(t, t+s]$ into subintervals of small size $\delta_\ep$ containing
each a number $n_\ep$ of updates, as shown in \fig{averaging}. 

\begin{figure}[h] 
\centering
\setlength{\epsfxsize}{4.5in}
\epsfbox{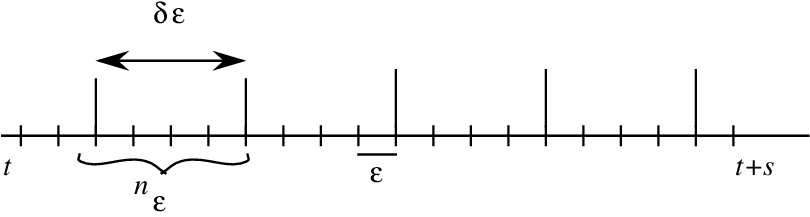}
\caption{$\delta_\ep=\ep n_\ep$. Condition: $\delta_\ep\to0, n_\ep\to\infty$ as
$\ep\to0$.}
\label{fig:averaging}
\end{figure}

Using the $\delta_\ep$ grouping of subintervals, one obtains the telescopic sum:
\[
\a^\ep(t+s) -\a^\ep(t) = -  \sum_{l=\lfloor
t/\delta_\ep\rfloor}^{\lfloor (t+s)/\delta_\ep\rfloor-1}  \delta_\ep \times\left( {1\over
n_\ep} \sum_{j=l n_\ep}^{(l+1) n_\ep -1}
 \left[ Y _0(j) + \sum_{l=1}^L \la_l^\ep(j+1) \, Y^\ep_l(j) \right]
\right).
\]

We now show that if $\Field^\ep(t)$ denotes the $\sigma$-algebra generated by the
interpolated process up to time $t$, then:
\begin{eqnarray}
&&\E [\a^\ep(t+s) - \a^\ep(t) \given \Field^\ep(t)] \approx \nonumber\\
&&\hskip 1em \sum_{l=\lfloor t/\delta_\ep\rfloor}^{\lfloor (t+s)/\delta_\ep\rfloor-1}
\delta_\ep \left(
h_0[\a^\ep(l n_\ep)] + \sum_{l=1}^L (\bar\la_l(\ep) + \G_l[\a^\ep(l
n_\ep)])\, h_l[\a^\ep(l n_\ep) + \kappa[\a^\ep(l n_\ep)]
\right) 
\label{eq:averaging}
\end{eqnarray}
where the expectation of the absolute error in the
end points of the $\delta_{\ep}$ discretization 
 vanishes as
$\ep\to0$. Let $\esp_{l n_\ep}$ denote the expectation conditioning on the information
available up to the start of the current small subinterval of size $\delta_\ep$.  Use now
conditional expectations to express
$\esp[Y _l(j) \given \Field^\ep(t)] =\esp[ \esp_{l n_\ep}[Y _l(j) ]
\given\Field^\ep(t)], l n_\ep \le j < (l+1) n_\ep $ for each term in the telescopic sum
$l=0,\ldots, L$. That is, we
use a filter of the terms, focusing on each of the averages within subintervals.

Because one is interested in averages, any version of the process $\{\a^\ep(n)\}$
can be used to characterize these conditional expectations. In particular, Skorohod
representation establishes that there is a process $\tilde\a^\ep(n)$ for each $\ep$
in the weakly convergent subsequence, such that $\tilde\a^\ep(n)$ has the same
distribution as $\a^\ep(n)$ and it converges with probability 1 to the same
a.s.~continuous limit $\a(t)$ (see \cite{KY03}). Because $\a(t)$ is
Lipschitz continuous w.p.1, $\|\a(l \delta_\ep+\delta_\ep) - \a(l\delta_\ep)\| =
{\cal O}(\delta_\ep)$ and since $\tilde\a^\ep(n)$ converges w.p.~1, it follows that 
$
\sup_{l n_\ep\le j < (l+1) n_\ep} \| \tilde \a^\ep(j) - \a(l \delta_\ep) \| \to 0
\quad\mbox{a.s.}$
which implies that the underlying distribution of  the batch estimators $Y_l (j), j=l
n_\ep, \ldots, (l+1) n_\ep -1$ converges to that of the fixed-$\param$ MDP at the
parameter value $\param=\a(l\delta_\ep)$. Using the fact that $\a^\ep(l
n_\ep)$ converges in distribution to $\a(l\delta_\ep)$, it follows that:
\begin{eqnarray*}
&&\esp_{l n_\ep} \left[
 {1\over n_\ep } \sum_{j=l n_\ep}^{(l+1) n_\ep -1}
 \left[ Y _0(j) + \sum_{l=1}^L \la_l^\ep(j+1) \, Y _l(j) \right]
 \right]
\\
&& \hskip 2 em =
\esp_{l n_\ep} \left[
 {1\over n_\ep } \sum_{j=l n_\ep}^{(l+1) n_\ep -1}
 \Ep\left[ \widehat{\nablal C} _0(j) + \sum_{l=1}^L (\la_l^\ep(j) + \hat
\G (j))\, \widehat{\nablal \G _l}(j) 
\right]
 \right ].
\end{eqnarray*} 

Our  batch estimation procedure  takes into account only new
information on each estimation interval. Given the initial state value (with the
aggregated  information about the living phantoms), and the value of $\la^\ep_l(j)$,
the expectation for the fixed $\param$ process of the gradient estimator
$\widehat{\nablal \G _l}(j)$ is independent of $\la^\ep_l(j)$. As
$n_\ep\to\infty$, the underlying process
$\{Z_n\}$ will have the stationary distribution for the $j$-th estimation batch, so that:
\begin{eqnarray*}
&&\esp_{l n_\ep} \left[
 {1\over n_\ep } \sum_{j=l n_\ep}^{(l+1) n_\ep -1}
 \left[ Y _0(j) + \sum_{l=1}^L \la_l^\ep(j+1) \, Y _l(j) \right]
 \right ]\\
&& \hskip 2 em = 
h_0[\a^\ep(l n_\ep)] + 
\sum_{l=1}^L  \left(
h_l[\a^\ep(l n_\ep)]\,
\esp_{l n_\ep} \left[{1\over n_\ep } \sum_{j=l n_\ep}^{(l+1) n_\ep -1}
  \la_l^\ep(j) \right]
+ \rho \, \Epi[\hat \G_l (n) \, \widehat{\nablal \G _l}(n) ] \right)\\
&& \hskip 2 em \approx
h_0[\a^\ep(l n_\ep)] + 
\sum_{l=1}^L  ( \bar\la(\ep) +\rho \G[\a^\ep(l n_\ep)] )
h_l[\a^\ep(l n_\ep)]
+  \kappa[\a^\ep(l n_\ep)  ],
\end{eqnarray*} 
which establishes \eq{eq:averaging}. 
Define now a piecewise constant function (on the $\delta_\ep$-subintervals):
\[
{\cal G}^\ep (\a^\ep(t), \bar\la(\ep)) = h_0[\a^\ep(l n_\ep)] + 
\sum_{l=1}^L  ( \bar\la_l(\ep) +\rho \G[\a^\ep(l n_\ep)] )
h_l[\a^\ep(l n_\ep)]
+  \kappa[\a^\ep(l n_\ep)  ],
\]
for $\delta_\ep \le t < (l+1) \delta_\ep$, then \eq{eq:averaging} implies
that:
$\esp [\a^\ep(t+s) - \a^\ep(t) \given \Field^\ep(t)] \approx 
\int _t ^{t+s} {\cal G}^\ep[\a^\ep(s),\bar\la(\ep)] \, ds 
$
which implies that the limit process is a martingale with zero quadratic variation. For a
detailed presentation of this methodology the reader is referred to \cite{KY03}. Taking
now the limit along the weakly convergent subsequence,
$\a^\ep(t)
\to\a(t),
\bar\la(\ep)\to\bar \la$ establishes the limiting ODE for this 
subsequence.  

\end{proof}

\end{document}